\newlist{enumarabic}{enumerate}{1}
\setlist[enumarabic]{font=\normalfont,label=(\arabic*)}
\newlist{enumroman}{enumerate}{1}
\setlist[enumroman]{font=\normalfont,label=(\roman*)}
\def\arxiv#1{\href{http://arxiv.org/abs/#1}{\texttt{arXiv:#1}}}
\let\intertext\shortintertext
\theoremstyle{plain}
\newtheorem{theorem}{Theorem}[section]
\newtheorem{proposition}[theorem]{Proposition}
\newtheorem{lemma}[theorem]{Lemma}
\newtheorem{corollary}[theorem]{Corollary}
\theoremstyle{definition}
\newtheorem{remark}[theorem]{Remark}
\theoremstyle{remark}
\newtheorem*{acknowledgements}{Acknowledgements}
\numberwithin{equation}{section}
\DeclareMathAlphabet\mathbfit{OML}{cmm}{b}{it}
\def\Z{\mathbb Z}
\def\Q{\mathbb Q}
\def\R{\mathbb R}
\def\C{\mathbb C}
\def\CP{\mathbb{CP}}
\DeclareMathOperator{\Hom}{Hom}
\DeclareMathOperator{\Ext}{Ext}
\let\MFsm\setminus
\def\setminus{\mathbin{\mkern-1mu\MFsm\mkern-1mu}}
\DeclareMathOperator{\depth}{depth}
\def\at#1{\left.#1\right|}
\def\pair#1{{\langle#1\rangle}}
\def\Hc{H_{c}}
\def\HT{H_{T}}
\def\HTc{H_{T,c}}
\def\AB{AB}
\def\ABc{\AB_{c}}
\def\barAB{{\smash{\overline{AB\mathstrut}}\mathstrut}}
\def\m{\mathfrak m}
\def\hHc{H^{c}}
\def\hHT{H^{T\!}}
\def\hHTc{H^{T,c}}
\def\cf{\emph{cf.}}
\let\epsilon\varepsilon
\let\phi\varphi
\def\citeorbitsone#1{\cite[#1]{AlldayFranzPuppe:orbits1}}
\def\citeorbitsfour#1{\cite[#1]{AlldayFranzPuppe:orbits4}}
\DeclareMathOperator{\rank}{rank}
\DeclareMathOperator{\codim}{codim}
\DeclareMathOperator{\height}{ht}
\def\pp{\mathfrak p}
\def\qq{\mathfrak q}
\def\ttt{\mathfrak t}
\def\B{B_{c}}
\def\barB{\bar B_{c}}
\def\Bc{B_{c}}
\def\BB{\mathcal{B}}
\def\facet{\mathbin{<_{1}}}
\def\PPP{\mathcal P}
\def\AAA{\mathcal{A}}
\def\Hodd{H^{\rm odd}}
\let\kk\R
\def\kktilde{\skew{-2}\tilde\R}
\let\AA\ell
\def\AAopt{;\AA}
\def\AAA{\mathcal A}
\def\piinv#1{X^{#1}}
\def\piinvdot#1{X^{\dot#1}}
\def\PP{\mathbb P}
\DeclareMathOperator{\Bl}{Bl}
\def\BlT{\Bl_{T}\mkern-3mu}
\def\BlL{\Bl_{L}\mkern-2mu}
\def\CP{\mathbb{CP}}
\def\Xf{X_{\rm f}}
\def\tX{\tilde X}
\def\tY{\tilde Y}
\def\TPc{T^{P}\mkern-3mu,c}
\def\tP{t^{P}}
\DeclareMathOperator{\projdim}{proj\,dim}
\begin{document}

\title[A quotient criterion for syzygies]{A quotient criterion for syzygies\\in equivariant cohomology}
\author{Matthias Franz}
\address{Department of Mathematics, University of Western Ontario,
      London, Ont.\ N6A\;5B7, Canada}
\email{mfranz@uwo.ca}
\thanks{The author was supported by an NSERC Discovery Grant.}

\hypersetup{pdfauthor=\authors}

\subjclass[2010]{Primary 57R91; secondary 13D02, 55N91}

\begin{abstract}
  Let \(X\) be a manifold with an action of a torus~\(T\)
  such that all isotropy groups are connected
  and satisfying some other mild hypotheses.
  We provide a necessary and sufficient criterion for the
  equivariant cohomology~\(\HT^{*}(X)\) with real coefficients
  to be a certain syzygy as module over~\(H^{*}(BT)\).
  It turns out that, possibly after blowing up the non-free part of the action,
  this only depends on the orbit space~\(X/T\)
  together with its stratification by orbit type.
  Our criterion unifies and generalizes results of many authors
  about the freeness and torsion-freeness of equivariant cohomology
  for various classes of \(T\)-manifolds.
\end{abstract}

\maketitle

\section{Introduction}

Let \(R\) be a polynomial ring in \(r\)~variables over a field.
Recall that a finitely generated \(R\)-module~\(M\) is called
a \(j\)-th syzygy
if there is an exact sequence
\begin{equation}
  0\to M\to F_{1}\to \dots \to F_{j}
\end{equation}
with finitely generated free \(R\)-modules~\(F_{1}\),~\ldots,~\(F_{j}\).
The first syzygies are exactly the torsion-free modules,
and the \(r\)-th syzygies the free ones.
Syzygies therefore interpolate between torsion-freeness and freeness.
In~\cite{AlldayFranzPuppe:orbits1} and~\cite{AlldayFranzPuppe:orbits4}, Allday, Puppe and the author
initiated the study of syzygies in the context of torus-equivariant cohomology;
this was extended in~\cite{Franz:nonab} to actions of compact connected Lie groups.

Because restriction to a maximal torus does not change the order of a syzygy
in equivariant cohomology \cite[Prop.~4.2]{Franz:nonab}, we can focus on the torus case.
So let \(T\cong(S^{1})^{r}\) be a torus
and \(R=H^{*}(BT)\) the cohomology of its classifying space
with real coefficients.
Consider the orbit filtration
\begin{equation}
  \label{eq:orbit-filtration}
  \emptyset = X_{-1} \subset
  X_{0} = X^{T} \subset X_{1} \subset \dots \subset X_{r} = X
\end{equation}
of a \(T\)-manifold~\(X\) of finite type.
The equivariant \(i\)-skeleton~\(X_{i}\) is the union of all \(T\)-orbits of dimension at most~\(i\).
The orbit filtration leads to the \emph{Atiyah--Bredon sequence}
\begin{equation}
  \label{eq:3:atiyah-bredon}
  \let\longrightarrow\rightarrow
  0
  \longrightarrow \HT^{*}(X)
  \longrightarrow \HT^{*}(X_0)
  \longrightarrow \HT^{*+1}(X_1, X_0)
  \longrightarrow \cdots
  \longrightarrow \HT^{*+r}(X_r, X_{r-1})
  \longrightarrow 0
\end{equation}
in equivariant cohomology.
The first map in~\eqref{eq:3:atiyah-bredon}
is induced by the inclusion~\(X_{0}\hookrightarrow X\),
and the others are the connecting maps in the long exact sequences
for the triples~\((X_{i+1},X_{i},X_{i-1})\).
We call \(\HT^{*+i}(X_{i}, X_{i-1})\)
the \(i\)-th position of this sequence; \(\HT^{*}(X)\) is at position~\(-1\).
One of the main achievements of~\cite{AlldayFranzPuppe:orbits1} 
was to relate the exactness of the Atiyah--Bredon sequence 
to the syzygy order of~\(\HT^{*}(X)\) \citeorbitsone{Thm.~5.7}:

\begin{theorem}[Allday--Franz--Puppe] 
  \label{thm:3:intro-partial-exactness}
  Let \(j\ge0\).
  The Atiyah--Bredon sequence~\eqref{eq:3:atiyah-bredon}
  is exact at all positions~\(i\le j-2\)
  if and only if
  \(\HT^{*}(X)\) is a \(j\)-th syzygy over~\(R\).  
\end{theorem}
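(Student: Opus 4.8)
The plan is to reduce the statement to a homological‑algebra characterisation of syzygies together with a cohomological interpretation of the positions of the Atiyah--Bredon sequence as $\Ext$‑modules. On the algebraic side I would use the following: since $R$ is a polynomial ring, hence regular, a finitely generated graded $R$‑module $M$ is a $j$‑th syzygy if and only if $\Ext^{p}_{R}(\operatorname{Tr}M,R)=0$ for $1\le p\le j$, where $\operatorname{Tr}M$ denotes the Auslander transpose of $M$ (the cokernel of the $R$‑dual of a minimal presentation, well‑defined up to free summands). This is the Auslander--Bridger characterisation of $j$‑torsion‑freeness, which over a regular ring is the same as being a $j$‑th syzygy; for $j=1$ it is torsion‑freeness, for $j=2$ reflexivity. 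I would also record the low‑degree identity $\Ext^{1}_{R}(\operatorname{Tr}M,R)=\ker(M\to M^{**})$, which over the domain $R$ is the torsion submodule of $M$; this is what will sit in the $(-1)$‑position of the sequence.

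The geometric input is that each subquotient $\HT^{*+i}(X_{i},X_{i-1})$ is, as a graded $R$‑module, Cohen--Macaulay of dimension $r-i$ (or zero). By the slice theorem, each component $S$ of the $i$‑stratum $X_{i}\setminus X_{i-1}$ carries an action whose common isotropy identity component is a subtorus $K$ of corank $i$; choosing a splitting $T\cong K\times(T/K)$ gives $\HT^{*}(S)\cong H^{*}(BK)\otimes_{\Q}H^{*}_{T/K}(S)$ as a module over $R\cong H^{*}(BK)\otimes_{\Q}H^{*}(B(T/K))$, and since $T/K$ acts almost freely the factor $H^{*}_{T/K}(S)$ is finite‑dimensional, hence of finite length over the $i$‑dimensional ring $H^{*}(B(T/K))$; the external tensor product is therefore Cohen--Macaulay of dimension $(r-i)+0$, and an excision argument carries this over to $\HT^{*+i}(X_{i},X_{i-1})$. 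Consequently $\Ext^{p}_{R}(\HT^{*+i}(X_{i},X_{i-1}),R)=0$ for $p\ne i$, and (by the same computation, or by local duality over the Gorenstein ring $R$) $\Ext^{i}_{R}(\HT^{*+i}(X_{i},X_{i-1}),R)$ is the relative equivariant homology $\HH^{T}_{*}(X_{i},X_{i-1})$, up to a degree shift.

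Now consider $\mathbf{R}\Hom_{R}(\AB^{\bullet}(X),R)$, where $\AB^{\bullet}(X)$ is the Atiyah--Bredon complex viewed as a complex of $R$‑modules, and compute it by the two hyper‑$\Ext$ spectral sequences of a Cartan--Eilenberg resolution. The vanishing above makes one of them degenerate onto a single diagonal already at $E_{1}$, so $\mathbf{R}\Hom_{R}(\AB^{\bullet}(X),R)$ is represented by the complex $\bigl(\Ext^{i}_{R}(\HT^{*+i}(X_{i},X_{i-1}),R)\bigr)_{i}$ with its induced differential; under the identification above this complex is, up to shift, the $E_{1}$‑page (with $d_{1}$) of the orbit‑filtration spectral sequence in equivariant homology, which converges to $\HH^{T}_{*}(X)$. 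Running the second spectral sequence then produces a natural isomorphism
\[
  H^{i}\bigl(\AB^{\bullet}(X)\bigr)\;\cong\;\Ext^{\,i+2}_{R}\bigl(\operatorname{Tr}\HT^{*}(X),\,R\bigr),\qquad -1\le i\le r-2,
\]
the augmentation being included on the left, so that $i=-1$ reads $\ker(\HT^{*}(X)\to\HT^{*}(X^{T}))$, which by the localisation theorem equals the torsion submodule of $\HT^{*}(X)$, that is, $\Ext^{1}_{R}(\operatorname{Tr}\HT^{*}(X),R)$. Granting the display, exactness of the Atiyah--Bredon sequence at positions $-1,\dots,j-2$ is equivalent to $\Ext^{p}_{R}(\operatorname{Tr}\HT^{*}(X),R)=0$ for $1\le p\le j$, hence, by the algebraic characterisation, to $\HT^{*}(X)$ being a $j$‑th syzygy.

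The crux is the displayed isomorphism. It depends, first, on the Cohen--Macaulayness of the strata subquotients, which genuinely uses the orbit geometry and the slice theorem; and, second and more delicately, on a comparison of the two spectral sequences fine enough that the \emph{differentials} match, with the right homological and internal‑degree bookkeeping, so that the homology of the dualised Atiyah--Bredon complex computes $\Ext$ of the Auslander transpose of $\HT^{*}(X)$ (equivalently, of a suitably defined dual equivariant cohomology) and not merely $\Ext$ of equivariant homology. Handling the augmentation term separately through the localisation theorem, and verifying naturality of the whole comparison, are the remaining technical points.
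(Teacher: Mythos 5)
Your overall architecture --- Cohen--Macaulayness of the orbit-filtration subquotients via the slice theorem, a hyper-$\Ext$ comparison of two spectral sequences in which one collapses onto a diagonal, an algebraic $\Ext$-vanishing criterion for syzygies, and the localization theorem for the position~$-1$ --- is the same as that of the cited proof (the theorem is imported from the companion paper; it is not reproved here). The gap is exactly the step you yourself flag as the crux: the displayed isomorphism. What the two-spectral-sequence comparison actually produces is $H^{i}(\AB^{*}(X))\cong\Ext^{i}_{R}(\hHT_{*}(X),R)$, with the equivariant \emph{homology} module on the right (equation \eqref{eq:AB-Ext-R} in the text; one also needs the degeneration of the homology orbit spectral sequence). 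Your display instead asserts $\Ext^{i+2}_{R}(\operatorname{Tr}\HT^{*}(X),R)$, which for $i\ge1$ equals $\Ext^{i}_{R}(\Hom_{R}(\HT^{*}(X),R),R)$. These are not interchangeable for free: $\hHT_{*}(X)$ is related to $\HT^{*}(X)$ only through a universal-coefficient spectral sequence, and the contributions of the higher modules $\Ext^{p}_{R}(\HT^{*}(X),R)$, $p\ge1$, to $\Ext^{i}_{R}(\hHT_{*}(X),R)$ do not obviously vanish in the range $1\le i\le r-2$. The ``second spectral sequence'' you invoke cannot produce the display either, since its abutment is again hyper-$\Ext$ of the Atiyah--Bredon complex, i.e.\ the $\hHT_{*}$ side; nothing in your argument ever manufactures the Auslander transpose.

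The cited proof closes this gap without forming $\operatorname{Tr}\HT^{*}(X)$: it characterizes $j$-th syzygies either by local depth conditions (as in Proposition~\ref{thm:syzygy-depth-fixedpoints} here, via Bruns--Vetter) or by the criterion that a dual module $\Hom_{R}(N,R)$ is a $j$-th syzygy if and only if $\Ext^{i}_{R}(N,R)=0$ for $1\le i\le j-2$, applied with $N=\hHT_{*}(X)$, and it uses the universal coefficient theorem to pass between $\HT^{*}(X)$ and $\Hom_{R}(\hHT_{*}(X),R)$ and to handle positions $-1$ and $0$. If you wish to keep the Auslander--Bridger packaging, you must additionally prove that $\Ext^{i}_{R}(\hHT_{*}(X),R)\cong\Ext^{i+2}_{R}(\operatorname{Tr}\HT^{*}(X),R)$ in the stated range; that identification is precisely the missing content, and it is where all the universal-coefficient bookkeeping lives.
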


Theorem\nobreakspace \ref {thm:3:intro-partial-exactness}
interpolates between the classical statement that the restriction map~\(\HT^{*}(X)\to\HT^{*}(X_{0})\)
is injective if and only if \(\HT^{*}(X)\) is torsion-free
and a result of Atiyah and Bredon~\cite[Main Lemma]{Bredon:1974}
saying that the whole sequence~\eqref{eq:3:atiyah-bredon}
is exact if \(\HT^{*}(X)\) is free.

Equally important is the case~\(j=2\), which states that the \emph{Chang--Skjelbred sequence}
\begin{equation}
  \let\longrightarrow\rightarrow
  0
  \longrightarrow \HT^{*}(X)
  \longrightarrow \HT^{*}(X_0)
  \longrightarrow \HT^{*+1}(X_1, X_0)
\end{equation}
is exact if and only if \(\HT^{*}(X)\) is a reflexive \(R\)-module (a second syzygy).
In this case one can efficiently compute \(\HT^{*}(X)\) out of data
related only to the fixed points and the \(1\)-dimensional orbits.
This is sometimes called the ``GKM method'' after work
of Goresky--Kottwitz--MacPherson~\cite[Thm.~7.2]{GoreskyKottwitzMacPherson:1998}.
If \(X\) satisfies Poincaré duality, then the second syzygies also characterize
the perfection of the equivariant Poincaré pairing \citeorbitsone{Prop.~5.8}.

One would therefore like to have an easy criterion to decide
whether or not \(\HT^{*}(X)\) is a certain syzygy. In this paper we provide
such a criterion for smooth \(T\)-manifolds,
possibly non-compact and{\slash}or non-orientable.

\smallskip

Let \(X\) be an \(n\)-dimensional smooth \(T\)-manifold. 
We say that the \(T\)-action is \emph{locally standard} if \(n\ge2r\)
and if locally the action looks like the one of \((S^{1})^{r}\) on
\( 
  \C^{r}\times\R^{n-2r}
\) 
that rotates the complex variables in the usual way and acts trivially on the second factor.
This generalizes the definition of `locally standard' in~\cite[Sec.~4.1]{MasudaPanov:2006},
which corresponds to the case~\(n=2r\).
Smooth toric varieties and
many other (open) torus manifolds~\cite{MasudaPanov:2006}, \cite{Masuda:2006}
are locally standard. Our generalization also includes interesting \(T\)-manifolds
whose fixed point set is not discrete, for example
the big polygon spaces introduced in~\cite{Franz:maximal}.

If the action on~\(X\) is not locally standard,
we study a certain blow-up~\(\BlT X\) of it
along fixed-point sets of circles,
see Section\nobreakspace \ref {sec:blowup} for a precise definition.
If all isotropy groups in~\(X\) are connected, then
\(\BlT X\) is again a \(T\)-manifold.
In the presence of disconnected isotropy groups, however,
the blow-up may have orbifold singularities.
Although we expect our results to hold for \(T\)-orbifolds,
we restrict ourselves to actions on manifolds with connected isotropy groups
to keep technicalities within reasonable bounds.

So let us assume for the rest of the introduction
that all isotropy groups occurring in~\(X\) are connected.
We also require \(T\) to act effectively and \(H^{*}(X)\) to be finite-dimensional.
Our first result shows that as far as syzygies are concerned,
it is enough to consider locally standard actions.
Note that if \(T\) acts without fixed points,
then \(\HT^{*}(X)\) cannot be torsion-free, for example
because of Theorem\nobreakspace \ref {thm:3:intro-partial-exactness}.

\begin{proposition}
  \label{thm:syzygy-blowup:intro}
  Let \(X\) be a \(T\)-manifold such that \(X^{T}\ne\emptyset\).
  Then the \(T\)-action on the blow-up~\(\BlT X\) is locally standard.
  Moreover, 
  \(\HT^{*}(X)\) is a \(j\)-th syzygy if and only if \(\HT^{*}(\BlT X)\) is.
\end{proposition}

If the \(T\)-action on~\(X\) is locally standard,
then the quotient~\(X/T\) is a manifold with corners.
Similar to Čech cohomology one can define,
for any (closed) face~\(P\) of~\(X/T\),
a complex~\(\BB^{*}(P)\) with
\begin{equation}
  \BB^{i}(P) =
  \!\! \bigoplus_{\substack{Q\le P \\ \rank Q=i}} \!\! H_{*}(Q)
\end{equation}
and differential
\begin{equation}
  \label{eq:def-d-intro}
  d \sigma = \!\! \sum_{\substack{Q\le O\le P \\ \rank O=i+1}} \!\! \pm(\iota_{QO})_{*}(\sigma)  
\end{equation}
for~\(\sigma\in H_{*}(Q)\subset\BB^{i}(P)\).
Here \(\iota_{QO}\colon Q\hookrightarrow O\) denotes the inclusion and
\(\rank Q\) the common dimension of the orbits in~\(X\) lying
over the interior of~\(Q\).
The signs in~\eqref{eq:def-d-intro} are determined by an ordering of
the facets (corank-\(1\) faces) of~\(X/T\), see Section\nobreakspace \ref {sec:decomposition}.

\begin{theorem}
  \label{thm:condition-syzygy-HBc-intro}
  Let \(X\) be a locally standard \(T\)-manifold, and let \(j\ge0\).
  Then \(\HT^{*}(X)\) is a \(j\)-th syzygy if and only if
  \(H^{i}(\BB^{*}(P))=0\)
  for all faces~\(P\) of~\(X/T\) and all \(i>\max(\rank P-j,0)\).
\end{theorem}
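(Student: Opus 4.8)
I would begin with Theorem~\ref{thm:3:intro-partial-exactness}, which reduces the statement to an assertion about the Atiyah--Bredon complex \(\OO^{\bullet}\) of~\eqref{eq:3:atiyah-bredon}: \(\HT^{*}(X)\) is a \(j\)-th syzygy exactly when \(\OO^{\bullet}\) is exact at all positions \(i\le j-2\). Everything then comes down to computing the cohomology of \(\OO^{\bullet}\) in a form that visibly depends only on the face poset of \(X/T\); the hypotheses are tailored to make this work, and equivariant Poincaré duality (Section~\ref{sec:equiv-cohomology}) will be used to handle non-compactness.

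\textbf{Step 1: local analysis of the orbit filtration.} The codimension-2 hypothesis~(\ref{ccc3-intro}) is what makes~\eqref{eq:orbit-filtration} tractable. Each of the finitely many closed submanifolds making up \(X_{r-1}\) carries a circle as generic isotropy group, and their mutual intersections are again fixed-point submanifolds of subtori; hence the equivariant \(i\)-skeleton \(X_{i}\) is built out of these submanifolds and their intersections, the rank-\(i\) faces \(P\) of \(X/T\) correspond to the connected components of \(X_{i}\setminus X_{i-1}\), and over the relative interior of such a \(P\) the identity component of the isotropy is a single subtorus \(K_{P}\subseteq T\) of dimension \(r-i\). Writing \(\pp_{P}=\ker\bigl(R\to H^{*}(BK_{P})\bigr)\) --- a linear prime of height \(\rank P\), generated by a regular sequence --- the term \(\HT^{*+i}(X_{i},X_{i-1})\) may be identified with the compactly supported equivariant cohomology \(\HTc^{*+i}(X_{i}\setminus X_{i-1})\), which splits over the components above; and since the residual \(T/K_{P}\)-action on the \(P\)-stratum is free up to finite groups and has compact orbits, the splitting principle yields an isomorphism of graded \(R\)-modules
\[
  \HT^{*+i}(X_{i},X_{i-1})\;\cong\;\bigoplus_{\rank P=i}\Hc^{*+i}(P,\partial P)\otimes_{\Q}R/\pp_{P},
\]
under which the Atiyah--Bredon differential is the sum over facet inclusions \(P\facet P'\) of the connecting maps \(\Hc^{*+i}(P,\partial P)\to\Hc^{*+i+1}(P',\partial P')\) tensored with the quotients \(R/\pp_{P}\twoheadrightarrow R/\pp_{P'}\). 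In particular every term of \(\OO^{\bullet}\) is a Cohen--Macaulay \(R\)-module whose associated primes all have height equal to its position.

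\textbf{Step 2: reduction to the local complexes \(\Bc^{*}(P)\).} After Step~1 the complex \(\OO^{\bullet}\) is purely combinatorial: a ``cellular cochain complex'' on the face poset of \(X/T\) carrying the module \(\Hc^{*+\rank P}(P,\partial P)\otimes R/\pp_{P}\) over the face \(P\). I would compute its cohomology by comparing it --- through an intermediate complex built directly from the stratification of the orbit space, and another assembled from the local data \(\{\Bc^{*}(P)\}\) --- with the direct sum of the \(\Bc^{*}(P)\). The mechanism is localization: localizing \(\OO^{\bullet}\) at the linear prime \(\pp_{P}\) kills every summand except those indexed by subfaces of \(P\) (together with the faces lying in the other components of the fixed locus \(X^{K_{P}}\)), and on the \(P\)-part the base change \((-)\otimes_{R}R/\pp_{P}\) turns \(\OO^{\bullet}\) into precisely \(\Bc^{*}(P)\). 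Because all the terms involved are Cohen--Macaulay of the expected dimension, the relevant higher \(\Ext\)-modules into \(R\) vanish and the comparison spectral sequences degenerate, so the localizations reassemble \(H^{*}(\OO^{\bullet})\) without error terms. Unwinding how a position \(i\) of \(\OO^{\bullet}\) is distributed among the faces, one finds that \(\OO^{\bullet}\) is exact at every position \(i\le j-2\) if and only if \(H^{i}(\Bc^{*}(P))=0\) for all faces \(P\) and all \(i>\max(\rank P-j,0)\); the truncation by \(\max(-,0)\) merely records that \(\Bc^{*}(P)\) is concentrated in positions \(0,\dots,\rank P\) and that its bottom positions never obstruct the injectivity of a restriction map.

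\textbf{The main obstacle} is Step~2: turning ``\(\OO^{\bullet}\) is this face-indexed complex of Cohen--Macaulay modules'' into a clean and functorial identification of its cohomology with the groups \(H^{*}(\Bc^{*}(P))\) in the indicated ranges. This requires introducing the intermediate complexes, proving that all three are quasi-isomorphic, and checking the degeneration of the comparison spectral sequences --- exactly the point where one needs both that each orbit stratum is Cohen--Macaulay and that suitable \(\Ext\)-groups of the equivariant cohomology of the strata vanish. Secondary technicalities are the bookkeeping around the \(\max(-,0)\) bound and, for non-compact \(X\), the systematic use of equivariant Poincaré duality to pass between ordinary and compactly supported equivariant cohomology.
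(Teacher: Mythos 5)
Your overall architecture --- decompose the Atiyah--Bredon complex face by face, identify the piece over a face \(P\) with \(\Bc^{*}(P)\) tensored with the polynomial ring of the isotropy torus, and read off the syzygy condition --- is the right one, and your Step 1 is essentially the paper's Lemma on normal bundles plus its Proposition identifying \(H^{i}\bigl(\Gr\ABc^{*}(X)\bigr)\) with \(\bigoplus_{P}H^{i}(\Bc^{*}(P))\otimes R_{P}\,t_{P}\) (modulo the fact that the splitting \(\HTc^{*}(\piinvdot{P})\cong\Hc^{*}(\dot P)\otimes R_{P}\) is not canonical, so one must filter and pass to the associated graded). But there are two genuine gaps. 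First, Theorem~\ref{thm:3:intro-partial-exactness} applied to \(X\) alone is the wrong reduction: it controls only the positions \(i\le j-2\) of the single complex \(\AB^{*}(X)\), whereas the target condition \(H^{i}(\Bc^{*}(P))=0\) for \(i>\max(\rank P-j,0)\) demands, for a face of rank \(p<j\), vanishing at \emph{all} positions up to \(p\) --- and a face of rank \(p\) contributes to position \(i\) of the global complex for every \(i\le p\), including positions \(>j-2\) and, dually, positions that the global sequence simply cannot isolate. The paper instead first proves that \(\HT^{*}(X)\) is a \(j\)-th syzygy iff \(\depth_{H^{*}(BL)}H_{L}^{*}(X^{K})\ge\min(j,\dim L)\) for \emph{every} subtorus \(K\) (via localization at the linear primes you mention, but packaged as a statement about all the fixed-point sets \(X^{K}\)), and then converts this by equivariant Poincaré duality and the \(\Ext\)-characterization of depth into the vanishing of \(H^{i}(\AB_{L,c}^{*}(X^{K}))\) for \(i>\max(\dim L-j,0)\); it is these \emph{compactly supported} sequences of all the \(X^{K}\), at \emph{high} positions, that match the face decomposition. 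Your "unwinding" at the end of Step 2 is exactly the content of this reduction and cannot be skipped; note also that for non-compact \(X\) the closed- and compact-support sequences genuinely differ.

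Second, and more seriously, your claim that "the comparison spectral sequences degenerate" because the terms are Cohen--Macaulay is unsupported and is not how the implication can be closed. The filtration spectral sequence going from \(\Gr\ABc^{*}\) to \(H^{*}(\ABc^{*})\) has differentials that Cohen--Macaulayness of the \(E_{1}\)-terms does not kill, and the paper never proves degeneration. In the direction "face condition \(\Rightarrow\) syzygy" it only uses that a vanishing \(E_{1}\)-page forces a vanishing abutment. For the converse --- showing that a nonzero class in \(H^{i}(\Bc^{*}(P))\) actually survives into \(H^{i}(\AB_{L,c}^{*}(X^{K}))\) --- the paper introduces a geometric device absent from your proposal: the blow-up \(\beta\colon Y\to X\) replacing each characteristic submanifold by its normal circle bundle, which yields a \(T\)-manifold with corners with \(Y_{r-1}=\emptyset\) and \(Y/T\cong X/T\). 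Filtering \(Y\) by the preimage of the orbit filtration of \(X\) produces a composite \(\Bc^{*}(P)\to\AB_{L,c}^{*}(\piinv{P})\to\Hc^{*}(Y_{\bullet}/L,Y_{\bullet-1}/L)\) equal to the identity, exhibiting \(H^{i}(\Bc^{*}(P))\) as a direct summand of \(H^{i}(\AB_{L,c}^{*}(X^{K}))\). Without this (or some substitute splitting) the "only if" half of the theorem does not follow from your Step 2.
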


This characterization unifies and extends results of many authors
concerning the freeness and torsion-freeness of
torus-equivariant cohomology.
This includes work of
Barthel--Brasselet--Fieseler--Kaup~\cite{BarthelBrasseletFieselerKaup:2002},
Masuda--Panov \cite{MasudaPanov:2006},
Masuda~\cite{Masuda:2006} and
Goertsches--Rollenske~\cite{GoertschesRollenske:2011}.
We also generalize a result of Bredon~\cite{Bredon:1974} about the cohomology of~\(X/T\)
and one of Ayzenberg--Masuda--Park--Zeng~\cite{AyzenbergEtAl:2014}
about the equivariant cohomology of certain torus manifolds,
and we recover the calculation of \(\Ext\)~modules of Stanley--Reisner rings done by
Mustaţă~\cite{Mustata:2000} and
Yanagawa~\cite{Yanagawa:2000}
as well as Munkres' formula for the depth of such rings \cite{Munkres:1984}.

Compact orientable \(T\)-manifolds of dimension~\(2r\)
have received a lot of attention so far,
for example smooth complete toric varieties
or torus manifolds.
We can now also explain why these spaces do not provide
interesting examples of syzygies.

\begin{corollary}
  \label{thm:torus-manifold-free-intro}
  Let \(X\) be a compact orientable \(T\)-manifold of dimension~\(2r\).
  Then\linebreak\(\HT^{*}(X)\) is torsion-free
  if and only if it is free over~\(R\).
\end{corollary}

\smallskip

The paper is organized as follows:
In Section\nobreakspace \ref {sec:prelim} we review background material, and in Section\nobreakspace \ref {sec:syzygies-fixedpoints} we characterize syzygies
by a depth condition on the equivariant cohomology
of fixed point sets of subtori.
Then we study
blow-ups of characteristics manifolds (Section\nobreakspace \ref {sec:blowup}).
Theorem\nobreakspace \ref {thm:condition-syzygy-HBc-intro}
is proved in Section\nobreakspace \ref {sec:decomposition},
based on a certain decomposition of the Atiyah--Bredon sequence for compact supports.
Multiplicative aspects are discussed in Section\nobreakspace \ref {sec:product}.
We present several applications in Section\nobreakspace \ref {sec:previous-work},
commenting on the relation to previous work;
in Section\nobreakspace \ref {sec:3:examples} we finally
illustrate the computational power of Theorem\nobreakspace \ref {thm:condition-syzygy-HBc-intro}
by revisiting and generalizing the examples
that were considered in~\cite{AlldayFranzPuppe:orbits1}.

\begin{acknowledgements}
  It is a pleasure to acknowledge the fruitful collaboration with Chris Allday and
  Volker Puppe \cite{AlldayFranzPuppe:orbits1},~\cite{AlldayFranzPuppe:orbits4}
  on which the present work is based.
  I am also indebted to Volker Puppe for several suggestions which have
  led to an improved presentation and to Sergio Chaves, Oliver Goertsches, Hiroaki Ishida
  and Michael Wiemeler for stimulating discussions.
  I finally thank the anonymous referees for their meticulous work and
  their valuable suggestions and corrections, in particular for providing the example
  of a regular, but not locally standard \(T\)-action in Section~\ref{sec:blowup} and
  for bringing Munkres' formula 
  to my attention.
\end{acknowledgements}

\section{Preliminaries}
\label{sec:prelim}

\subsection{Standing assumptions}

All tensor products are over~\(\kk\)
and all cohomology is taken with real coefficients unless indicated otherwise.
In particular, \(R=H^{*}(BT)\) is a polynomial ring
in \(r\)~generators of degree~\(2\) with real coefficients.
Here and elsewhere \(H^{*}(-)\) denotes Alexander--Spanier cohomology.
We say that a space~\(Y\) is \emph{acyclic} if \(\tilde H^{*}(Y)=0\).
Note that the empty set is not acyclic since \(\tilde H_{-1}(\emptyset)=\kk\).

From the next section on, we will only consider \(T\)-manifolds.
However, in this section we more generally allow the same \(T\)-spaces~\(X\)
as in~\cite{Franz:nonab}. This means that \(X\) is a closed \(T\)-stable
subset of a \(T\)-manifold or \(T\)-orbifold
such that \(H^{*}(X)\) is a finite-dimensional \(\kk\)-vector space.
Also, only finitely many subtori of~\(T\) are allowed to occur
as the identity component of an isotropy group in~\(X\).
This latter condition is automatically satisfied
if \(X\) is a manifold or locally orientable orbifold
\cite[Thm.~7.7]{Franz:nonab}.

For any subgroup~\(K\subset T\), we write \(X^{K}\subset X\) for the closed subset of \(K\)-fixed points.
We call \(K\) an \emph{isotropy subtorus occurring in~\(X\)} if it is
the identity component of the isotropy group~\(T_{x}\) of some~\(x\in X\).

\subsection{Depth and syzygies}
\label{sec:depth}

In this section,
\(R\) denotes a polynomial ring over some field in \(r\)~indeterminates of positive degrees,
and \(\m\lhd R\) its maximal homogeneous ideal.

Since \(R\) is regular, one has
\begin{equation}
  \label{eq:depth-dim-Rp}
  \depth_{R_{\pp}}R_{\pp} = \dim R_{\pp}=\height\pp
\end{equation}
for all prime ideals~\(\pp\lhd R\).
Here \(\height\pp\) denotes the height of~\(\pp\),
that is, the maximal length of a chain of prime ideals contained in~\(\pp\).
We observe that if \(\pp\) is generated by \(m\) linearly independent linear polynomials,
then
\begin{equation}
  \label{eq:height-linear}
  \height\pp=m.
\end{equation}
(It cannot be smaller because there is an obvious chain of prime ideals of length~\(m\)
ending with~\(\pp\). And if there were a longer chain,
we could similarly extend it to a chain in~\(R\) of length greater than~\(r\),
thus contradicting \(\dim R=r\).)

Let \(M\) be a finitely generated graded \(R\)-module.
An important role in our arguments will be played by the \(\Ext\)~modules
\begin{equation}
  \Ext_{R}^{*}(M,R).
\end{equation}
(Note that because \(M\) is finitely generated,
the graded \(\Ext\) and the ungraded \(\Ext\) coincide, \cf~\cite[p.~33]{BrunsHerzog:1998}.)
Moreover, for any (possibly ungraded) ring extension~\(S\supset R\)
which is flat over~\(R\) one has
\begin{align}
  \Hom_{S}(S\otimes_{R}M,S) &= S\otimes_{R}\Hom_{R}(M,R) \\
  \shortintertext{and therefore}
  \label{eq:ext-ring-ext}
  \Ext_{S}^{*}(S\otimes_{R}M,S) &= S\otimes_{R}\Ext_{R}^{*}(M,R).
\end{align}

The depth of~\(M\) is the common length of all maximal \(M\)-regular sequences in~\(\m\)
(which is \(\infty\) for~\(M=0\));
we will often use that it can equivalently be expressed as
\begin{equation}
  \label{eq:depth-Ext}
  \depth M = \min\{\, i \,|\, \Ext_{R}^{r-i}(M,R)\ne0 \,\},
\end{equation}
\cf~\cite[Prop.~A1.16]{Eisenbud:2005}.
Note that \(\depth M=r\) if and only if \(M\ne0\) is free.
A formula analogous to~\eqref{eq:depth-Ext} exists for finitely generated modules~\(N\)
over any localization~\(R_{\pp}\) of~\(R\),
\begin{equation}
  \label{eq:depth-Ext-local}
  \depth_{R_{\pp}} N = \min\{\, i \,|\, \Ext_{R_{\pp}}^{\bar r-i}(N,R_{\pp})\ne0 \,\},
\end{equation}
where \(\bar r=\depth R_{\pp}\):
Given that \(N\) has finite projective dimension,
\eqref{eq:depth-Ext-local} is by the Auslander--Buchsbaum formula equivalent to the statement
\begin{equation}
  \label{eq:projdim-Ext-local}
  \projdim_{R_{\pp}} N = \max\{\, i \,|\, \Ext_{R_{\pp}}^{i}(N,R_{\pp})\ne0 \,\}.
\end{equation}
(One clearly has ``\(\ge\)''. Moreover, \eqref{eq:projdim-Ext-local}
becomes an equality if one substitutes
the residue field~\(k\) for~\(R_{\pp}\) as the second argument of~\(\Ext\),
\cf~\cite[Prop.~1.3.1]{BrunsHerzog:1998}.
Since \(R_{\pp}\) surjects onto~\(k\), \eqref{eq:projdim-Ext-local} 
follows from the long exact sequence for~\(\Ext\).)

The definition of a syzygy has been given in the introduction.
We will need that \(M\) is a \(j\)-th syzygy over~\(R\)
if and only if
\begin{equation}
  \label{eq:condition-syzygy}
  \depth_{R_{\pp}}M_{\pp} \ge \min(j,\depth_{R_{\pp}} R_{\pp}) 
\end{equation}
holds for all prime ideals~\(\pp\lhd R\) \cite[Sec.~16E]{BrunsVetter:1988}.
Note that the depth of~\(R_{\pp}\) can be replaced by its dimension
according to~\eqref{eq:depth-dim-Rp}.
Together with~\eqref{eq:depth-Ext-local} and the additivity of the \(\Ext\)~functor,
the criterion~\eqref{eq:condition-syzygy} shows
that a direct sum~\(M\oplus M'\) is a \(j\)-th syzygy if and only if both \(M\)~and \(M'\) are.

\subsection{The Atiyah--Bredon complex}

We write
\(\AB^{*}(X)\) or, if necessary, \(\AB_{T}^{*}(X)\), for the non-augmented Atiyah--Bredon complex
\begin{equation}
  \label{eq:3:atiyah-bredon-nonaug}
  \let\longrightarrow\rightarrow
  \HT^{*}(X_0)
  \longrightarrow \HT^{*+1}(X_1, X_0)
  \longrightarrow \cdots
  \longrightarrow \HT^{*+r}(X_r, X_{r-1})
\end{equation}
obtained by removing the leading term~\(\HT^{*}(X)\) from the sequence~\eqref{eq:3:atiyah-bredon}.
By a result of Allday--Franz--Puppe~\citeorbitsone{Thm.~5.1},
the cohomology of the Atiyah--Bredon complex depends only
on the equivariant homology~\(\hHT_{*}(X)\)
as defined in~\citeorbitsone{Sec.~3.3}:

\begin{theorem} 
  \label{thm:3:exthab}
  For any~\(i\ge0\),
  \begin{equation*}
    H^{i}(\AB^{*}(X))=\Ext^{i}_{R}(\hHT_{*}(X),R).
  \end{equation*}
\end{theorem}

More important than the definition of equivariant homology 
is for us that it is related
to equivariant cohomology via equivariant Poincaré duality:
If \(X\) is a compact orientable \(T\)-manifold of dimension~\(n\), then
\begin{equation}
  \HT^{*}(X) \cong \hHT_{n-*}(X)
\end{equation}
as \(R\)-modules \citeorbitsone{Prop.~3.7}.
More generally, for any \(T\)-manifold~\(X\) one has
\begin{equation}
  \label{eq:PD-c}
  \HT^{*}(X) \cong \hHTc_{n-*}(X;\kktilde)
\end{equation}
where now \(\hHTc_{*}(X;\kktilde)\) denotes the equivariant homology
of~\(X\) with closed supports and twisted coefficients \citeorbitsfour{Sec.~2.6, Prop.~3.2}.
In this case one can also define an Atiyah--Bredon complex~\(\ABc^{*}(X;\kktilde)\)
for equivariant cohomology with compact supports and twisted coefficients:
\begin{equation}
  \label{eq:3:atiyah-bredon-nonaug-c}
  \let\longrightarrow\rightarrow
  \HTc^{*}(X_0;\kktilde)
  \longrightarrow \HTc^{*+1}(X_1, X_0;\kktilde)
  \longrightarrow \cdots
  \longrightarrow \HTc^{*+r}(X_r, X_{r-1};\kktilde).
\end{equation}
It is related to~\(\hHTc_{*}(X;\kktilde)\) through an analogue of Theorem\nobreakspace \ref {thm:3:exthab},
\begin{equation}
  \label{eq:AB-Ext-R-c}
  H^{i}(\ABc^{*}(X;\kktilde))=\Ext^{i}_{R}(\hHTc_{*}(X;\kktilde),R)
\end{equation}
for any~\(i\ge0\) \citeorbitsfour{Cor.~4.6}.

Combining these results with~\eqref{eq:depth-Ext}, we obtain:

\begin{proposition} $ $
  \label{thm:depth-ABc}
  \begin{enumroman}
  \item
    For any \(T\)-space~\(X\),
    \begin{equation*}
      \depth \hHT_{*}(X) = \min\{\, i \,|\, H^{r-i}(\AB^{*}(X))\ne0 \,\}.
    \end{equation*}
  \item \label{thm:depth-ABc-hommf}
    If \(X\) is a \(T\)-manifold, then
    \begin{equation*}
      \depth \HT^{*}(X) = \min\{\, i \,|\, H^{r-i}(\ABc^{*}(X;\kktilde))\ne0 \,\}.
    \end{equation*}
  \end{enumroman}
\end{proposition}

\section{Syzygies and fixed point sets}
\label{sec:syzygies-fixedpoints}

Let \(K\subset T\) be a subtorus and set \(L=T/K\). We write \(R_{K}=H^{*}(BK)\) and~\(R_{L}=H^{*}(BL)\).
(This naming scheme is adopted for other tori as well.)
The projection~\(T\to L\) induces a morphism of algebras~\(R_{L}\to R\), so that
\(R\) is canonically an \(R_{L}\)-module. Identifying \(L\) with some torus complement to~\(K\) in~\(T\),
we get \emph{non-canonical} isomorphisms \(T\cong K\times L\) and \(R\cong R_{K}\otimes R_{L}\).
Note that \(R\) is free as \(R_{L}\)-module.

\begin{lemma}
  \label{thm:reduction-effective}
  Assume that the subtorus~\(K\subset T\) acts trivially on~\(X\), so that we get an induced action of~\(L=T/K\) on~\(X\).
  \begin{enumroman}
  \item \label{thm:reduction-effective-HT}
    There are isomorphisms of \(R\)-algebras
    \begin{equation*}
     \HT^{*}(X)=R\otimes_{R_{L}}H_{L}^{*}(X)\cong R_{K}\otimes H_{L}^{*}(X). 
    \end{equation*}
    The first isomorphism is canonical. The second one depends on the chosen
    splitting~\(T\cong K\times L\) via the isomorphism~\(R\cong R_{K}\otimes R_{L}\).
  \item \label{thm:reduction-effective-depth}
    \(\depth_{R} \HT^{*}(X)=\depth_{R_{L}}H_{L}^{*}(X)+\dim K\).
  \end{enumroman}
\end{lemma}

Note that one may always pass to an almost effective action
by dividing out the largest trivially operating subtorus of~\(T\).
(A torus action on~\(X\)
is called \emph{almost effective} if only finitely many group elements act trivially.)

\begin{proof}
  The first assertion is well-known, see for example~\citeorbitsfour{Prop.~2.6},
  and it implies
  \begin{equation}
    \Ext_{R}^{i}(\HT^{*}(X),R) 
    = R\otimes_{R_{L}}\Ext_{R_{L}}^{i}(H_{L}^{*}(X),R_{L}).
  \end{equation}
  for any~\(i\ge0\) by~\eqref{eq:ext-ring-ext}.
  As \(R\) is free over~\(R_{L}\), we conclude
  \begin{equation}
    \Ext_{R}^{i}(\HT^{*}(X),R) = 0
    \quad\Longleftrightarrow\quad
    \Ext_{R_{L}}^{i}(H_{L}^{*}(X),R_{L}) = 0.
  \end{equation}
  Together with~\eqref{eq:depth-Ext} this proves the second claim
  since \(\dim R=\dim R_{L}+\dim K\).
\end{proof}

Let \(\pp\lhd R\) be a prime ideal, and let \(\qq\subset\pp\)
be the prime ideal generated by \(\pp\cap H^{2}(BT;\Q)\).
Then \(\qq\) is the kernel of the restriction map~\(H^{*}(BT)\to H^{*}(BK)\)
for some subtorus~\(K\subset T\) with quotient~\(L=T/K\).
As above, we identify \(L\) with a torus complement to~\(K\) in~\(T\).

\begin{lemma}
  \label{thm:prop-Rpp}
  Keeping this notation, one has for any~\(i\ge0\) that
  \begin{equation*}
    \Ext_{R_{\pp}}^{i}(\HT^{*}(X)_{\pp},R_{\pp}) = 0
    \quad\Longleftrightarrow\quad
    \Ext_{R_{L}}^{i}(H_{L}^{*}(X^{K}),R_{L}) = 0.
  \end{equation*}
\end{lemma}

\begin{proof}
  We start by observing that 
  the \(R_{L}\)-module~\(R_{\pp}\) is faithfully flat.
  This is
  because \(R\cong R_{K}\otimes R_{L}\) is a free \(R_{L}\)-module,
  localization is exact and the localization map
  \begin{equation}
    R\otimes_{R_{L}}N \cong R_{K}\otimes N\to (R_{K}\otimes N)_{\pp} \cong R_{\pp}\otimes_{R_{L}}N
  \end{equation}
  is injective for any \(R_{L}\)-module~\(N\),
  \cf~\cite[Lemma~1.2]{ChangSkjelbred:1974}.
  (Write an~\(a\notin\pp\) as a sum of homogeneous elements
  of the form~\(a'\otimes a''\in R_{K}\otimes R_{L}\).
  Then it must have a 
  component in~\(R_{K}\otimes\kk\), so that
  multiplication by~\(a\) is injective on~\(R_{K}\otimes N\).)  

  By Lemma\nobreakspace \ref {thm:reduction-effective}\,\ref{thm:reduction-effective-HT} and
  the localization theorem in equivariant cohomology,
  \cf~\citeorbitsone{Thm.~2.5},
  we have
  \begin{equation}
    \HT^{*}(X)_{\pp} = \HT^{*}(X^{K})_{\pp}
    = \bigl( R\otimes_{R_{L}} H_{L}^{*}(X^{K}) \bigr)_{\pp}
    = R_{\pp} \otimes_{R_{L}} H_{L}^{*}(X^{K}),
  \end{equation}
  hence
  \begin{equation}
    \Ext_{R_{\pp}}^{i}(\HT^{*}(X)_{\pp},R_{\pp})
    = R_{\pp}\otimes_{R_{L}} \Ext_{R_{L}}^{i}(H_{L}^{*}(X^{K}),R_{L})
  \end{equation}
  by~\eqref{eq:ext-ring-ext}.
  Thus, the claim follows from faithful flatness.
\end{proof}

\begin{proposition} \label{thm:syzygy-depth-fixedpoints} $ $
  \begin{enumroman}
  \item \label{thm:syzygy-depth-1}
    For any subtorus~\(K\subset T\), 
    \begin{equation*}
      \depth_{R}\HT^{*}(X^{K}) 
      \ge \depth_{R}\HT^{*}(X). 
    \end{equation*}
  \item \label{thm:syzygy-depth-2}
    \(\HT^{*}(X)\) is a \(j\)-th syzygy if and only if
    \begin{equation*}
      \depth_{R_{L}} H_{L}^{*}(X^{K}) \ge \min(j,\dim L)
    \end{equation*}
    for any subtorus~\(K\subset T\) with quotient~\(L=T/K\).
    If \(X\) is a \(T\)-manifold, then it suffices to look at the isotropy subtori~\(K\) occurring in~\(X\).
  \item \label{thm:syzygy-XK}
    If \(\HT^{*}(X)\) is a \(j\)-th syzygy over~\(R\), then
    so is \(H_{L}^{*}(X^{K})\) over~\(R_{L}\)
    for any subtorus~\(K\subset T\) with quotient~\(L=T/K\).
  \end{enumroman}
\end{proposition}

\begin{proof}
  For the first claim
  we use the notation introduced before Lemma\nobreakspace \ref {thm:prop-Rpp}
  with~\(\pp=\qq=\ker(H^{*}(BT)\to H^{*}(BK))\).
  In terms of the identity~\eqref{eq:depth-Ext},
  our assumption is
  \begin{equation}
    \Ext_{R}^{i}(\HT^{*}(X),R) = 0
  \end{equation}
  for~\(i>r-\depth\HT^{*}(X)=\dim L-(\depth\HT^{*}(X)-\dim K)\),
  hence also
  \begin{equation}
    \Ext_{R_{\pp}}^{i}(\HT^{*}(X)_{\pp},R_{\pp})=\bigl(\Ext_{R}^{i}(\HT^{*}(X),R)\bigr)_{\pp} = 0.
  \end{equation}
  By Lemma\nobreakspace \ref {thm:prop-Rpp},
  this implies
  \begin{equation}
    \depth_{R_{L}}H_{L}^{*}(X^{K}) \ge \depth_{R}\HT^{*}(X)-\dim K.
  \end{equation}
  From Lemma\nobreakspace \ref {thm:reduction-effective}\,\ref{thm:reduction-effective-depth}
  we now conclude
  \begin{equation}
    \depth_{R}\HT^{*}(X^{K})=\dim K+\depth_{R_{L}}H_{L}^{*}(X^{K}) \ge \depth_{R}\HT^{*}(X).
  \end{equation}

  We now turn to the equivalence in part~\ref{thm:syzygy-depth-2}.

  \(\Leftarrow\):
  We verify condition~\eqref{eq:condition-syzygy}. For a given prime ideal~\(\pp\lhd R\),
  we choose~\(\qq\), \(K\) and~\(L\) as done before Lemma\nobreakspace \ref {thm:prop-Rpp}.
  Recall that \(\dim R_{\qq}\) equals the height of~\(\qq\), which in our case is \(\dim L\)
  by~\eqref{eq:height-linear}.

  Combining our assumption with~\eqref{eq:depth-Ext},
  we see that \(\Ext_{R_{L}}^{i}(H_{L}^{*}(X),R_{L})\)
  vanishes for all \(i > \max(\dim L-j,0)\), whence
  \begin{equation}
    \label{eq:Ext-Rpp-0}
    \Ext_{R_{\pp}}^{i}(\HT^{*}(X)_{\pp},R_{\pp}) = 0
  \end{equation}
  by Lemma\nobreakspace \ref {thm:prop-Rpp}.
  Since \(\dim L= 
  \height\qq\le\height\pp=\dim R_{\pp}=\vcentcolon\bar r\),
  we in particular have \eqref{eq:Ext-Rpp-0}
  for all~\(i > \max(\bar r-j,0)\),
  which by~\eqref{eq:depth-Ext} means
  \begin{equation}
    \depth_{R_{\pp}}\HT^{*}(X)_{\pp} \ge \min(j,\bar r).
  \end{equation}
  Hence \(\HT^{*}(X)\) is a \(j\)-th syzygy by~\eqref{eq:condition-syzygy}. 

  \(\Rightarrow\): One reverses the argument with~\(\pp=\qq=\ker(H^{*}(BT)\to H^{*}(BK))\),
  using 
  that \(\dim L=\dim R_{\pp}\) in this case.

  Consider next a \(T\)-manifold~\(X\) satisfying the depth condition for
  all isotropy subtori occurring in it, and let \(K\subset T\) be a subtorus that does not occur.
  Each connected component~\(Y\) of~\(X^{K}\) is a \(T\)-manifold.
  Let \(T/\tilde K\) be the principal orbit type of~\(Y\).
  Then the identity component~\(K'\) of~\(\tilde K\) occurs in~\(X\) and properly contains \(K\),
  and \(Y\) is a connected component of~\(X^{K'}\).
  Write \(\dim K'=\dim K+s\) and \(L'=T/K'\).
  By assumption and Lemma\nobreakspace \ref {thm:reduction-effective}\,\ref{thm:reduction-effective-depth},
  we have
  \begin{align}
    \depth_{R_{L}} H_{L}^{*}(Y) &\ge \depth_{R_{L}} H_{L}^{*}(X^{K'})
    = \depth_{R_{L'}} H_{L'}^{*}(X^{K'}) + s \\
    \notag
    &\ge \min(j,\dim L') + s \ge \min(j,\dim L) .
  \end{align}
  Because this holds for any connected component~\(Y\) of~\(X^{K}\),
  the depth condition is satisfied for~\(K\), too.

  The third claim is a consequence of the second:
  The subtori of~\(L=T/K\)
  correspond bijectively to those of~\(T\) containing~\(K\).
  If \(K''\subset L\) corresponds to~\(K'\subset T\), then \(L''=L/K''\) is isomorphic to~\(L'=T/K'\),
  and \((X^{K})^{K''}=X^{K'}\).
  Given that \(\HT^{*}(X)\) is a \(j\)-th syzygy over~\(R\), we get
  \begin{align}
    \depth_{R_{L''}}H_{L''}^{*}\bigl((X^{K})^{K''}\bigr)
    &= \depth_{R_{L'}}H_{L'}^{*}(X^{K'}) \\
    \notag
    &\ge \min(j,\dim L') = \min(j,\dim L''),
  \end{align}
  showing that \(H_{L}^{*}(X^{K})\) is a \(j\)-th syzygy over~\(R_{L}\).
\end{proof}

Combining Propositions~\ref{thm:depth-ABc}\,\ref{thm:depth-ABc-hommf}
and~\ref{thm:syzygy-depth-fixedpoints}\,\ref{thm:syzygy-depth-2},
we get:

\begin{corollary} $ $
  \label{thm:syzygy-ABc-fixedpoints}
  Assume that \(X\) is a \(T\)-manifold, and let \(j\ge0\).
  Then \(\HT^{*}(X)\) is a \(j\)-th syzygy if and only if
  \begin{equation*}
    H^{i}(\AB_{L,c}^{*}(X^{K};\kktilde))=0
  \end{equation*}
  for any isotropy subtorus~\(K\subset T\) occurring in~\(X\), its quotient~\(L=T/K\)
  and any \(i>\max(\dim L-j,0)\).
\end{corollary}

\section{Blowing up characteristic submanifolds}
\label{sec:blowup}

We assume for the rest of the paper that \(X\) is a \(T\)-manifold such that
\begin{enumerate}
\item \(X\) is non-empty and connected,
\item \(H^{*}(X)\) is finite-dimensional,
\item \(T\) acts effectively on~\(X\), and
\item all isotropy groups in~\(X\) are connected (hence tori).
\end{enumerate}
The second assumption is necessary in order to use the results of~\cite{AlldayFranzPuppe:orbits1}
and~\cite{AlldayFranzPuppe:orbits4}.
It implies that \(X\) has only finitely many components.
Since \(\HT^{*}(X)\) is a \(j\)-th syzygy if and only if this is true for each component
(see Section\nobreakspace \ref {sec:depth}),
it does not hurt to assume \(X\) to be connected. In the same vein we may appeal
to Lemma\nobreakspace \ref {thm:reduction-effective} and divide out a subtorus of~\(T\)
that acts trivially on~\(X\) to arrive at an almost effective action,
which will be effective by virtue of the fourth assumption.
As pointed out in the introduction, we do not expect the
latter assumption to be essential, either, but it ensures
that we need not consider orbifolds.

\begin{lemma}
  \label{thm:isotropy}
  Let \(X\) be a \(T\)-manifold (satisfying the assumptions above).
  \begin{enumroman}
  \item
    \label{thm:isotropy-1}
    Only finitely many isotropy groups occur in~\(X\).
  \item
    \label{thm:isotropy-2}
    \(X^{K}\) has only finitely many components for each isotropy subgroup~\(K\subset T\).
  \item
    \label{thm:isotropy-3}
    \(H^{*}(Y)\) is finite-dimensional for each such component~\(Y\).
  \end{enumroman}
\end{lemma}

\begin{proof}
  Because \(X\) is a manifold with finite-dimensional cohomology,
  only finitely many infinitesimal orbit types occur in~\(X\) \cite[Thm.~7.7]{Franz:nonab}.
  This proves the first claim since \(T\) is abelian and all isotropy groups are connected.
  The remaining parts are consequences
  of the localization theorem in equivariant cohomology, \cf~\cite[Cor.~3.10.2]{AlldayPuppe:1993}.
\end{proof}

A \emph{characteristic circle} for~\(X\) is a circle~\(K\subset T\)
that occurs as the isotropy group of some~\(x\in X\).
By Lemma\nobreakspace \ref {thm:isotropy}\,\ref{thm:isotropy-1},
there are only finitely many characteristic circles for~\(X\).
The finitely many connected components of~\(X^{K}\),
where \(K\) runs through the characteristic circles of~\(X\),
are called \emph{characteristic submanifolds} of~\(X\).
If they are all of codimension~\(2\),
we say that the \(T\)-action on~\(X\) is \emph{regular}.

We single out a special class  of regular actions.
For~\(n\ge 2r\)
consider the action of \((S^{1})^{r}\) on~\(\C^{r}\times\R^{n-2r}\) where
\((S^{1})^{r}\) acts in the usual way on~\(\C^{r}\) and
trivially on~\(\R^{n-2r}\).
We say that the \(T\)-action on~\(X\) is \emph{locally standard}
if \(n=\dim X\ge 2r\) and if each~\(x\in X\) has a \(T\)-stable open neighbourhood
weakly equivariantly diffeomorphic to an
\((S^{1})^{r}\)-stable neighbourhood of~\(\C^{r}\times\R^{n-2r}\),
that is,
equivariantly diffeomorphic with respect to some isomorphism~\(T\cong(S^{1})^{r}\).
It is clear that any locally standard action is regular.
The converse does not hold in general. For example, the
restriction of the usual action of~\((S^{1})^{2}\) on~\(\C^{2}\)
to the unit sphere~\(S^{3}\) is regular (with two circles as characteristic submanifolds),
but not locally standard.

\begin{lemma}
  \label{thm:regular-loc-std}
  Let \(X\) be a regular \(T\)-manifold such that \(X^{T}\ne\emptyset\).
  Then the action is locally standard.
\end{lemma}

\begin{proof}
  Assume first that \(X=V\) is an \(n\)-dimensional (real) representation of~\(T\),
  say with fixed subspace of dimension~\(m\)
  and non-zero weights~\(\chi_{1}\),~\dots,~\(\chi_{s}\), \(2s=n-m\).
  We claim that the~\(\chi_{j}\)'s form a basis of the character lattice of~\(T\).
  This will imply that \(V\) is weakly equivariantly diffeomorphic to
  \( 
    \C^{r}\times\R^{m}
  \). 

  If the weights do not span the character lattice, then there is
  a non-trivial subgroup~\(K\subset T\) acting trivially on~\(V\). 
  (If the weights span the lattice rationally, then \(K\) is discrete;
  otherwise it contains a circle.)
  But this is impossible since the action is effective.

  Now assume that the~\(\chi_{j}\)'s are linearly dependent.
  Then we must have \(s\ge2\), and at most~\(s-2\) of them,
  say, \(\chi_{1}\),~\ldots,~\(\chi_{s'}\), span a hyperplane~\(W\subset\ttt^{*}\)
  that does not contain \(\chi_{j}\) for~\(j>s'\).
  (Look at the reduced row echelon form of the matrix whose columns are the~\(\chi_{j}\)'s.)
  Consider a point~\(v\in V\) whose component corresponding to~\(\chi_{j}\)
  is non-zero for~\(j\le s'\) and zero otherwise.
  The circle~\(K\subset T\) whose Lie algebra is the annihilator of~\(W\)
  is the unique subcircle of~\(T\) such that \(v\in V^{K}\).
  Hence \(K\) is characteristic, but the codimension of~\(V^{K}\) in~\(V\) is \(2(s-s')\ge4\).
  This contradicts our assumption that \(T\) acts regularly.

  Now let \(X\) be arbitrary.
  For~\(x\in X\), let \(V\) be a differentiable slice at~\(x\) with isotropy subgroup~\(T_{x}\)
  of rank~\(k\).
  All isotropy groups of~\(T_{x}\) on~\(V\) are connected,
  and \(T_{x}\) acts effectively and regularly.
  By what we have just shown, this means that this action is locally standard.
  Hence an equivariant tubular neighbourhood of the orbit~\(Tx\) is
  weakly equivariantly diffeomorphic to the obvious action of~\((S^{1})^{r}\) on
  \begin{equation}
    \label{eq:locally}
    \C^{k} \times \R^{m} \times (S^{1})^{l}
  \end{equation}
  where \(l=r-k\).
  
  We have \(\dim X=2k+l+m\ge2r=2(k+l)\)
  because \(X\) is connected and the inequality holds near a fixed point.
  Hence \(m\ge l\), and we can replace \(\R^{l}\times(S^{1})^{l}\)
  by some suitable open subset of~\(\C^{l}\) in~\eqref{eq:locally}.
  This completes the proof.
\end{proof}

Let \(Y\) be a characteristic submanifold of~\(X\) with characteristic circle~\(K\subset T\),
and fix an isomorphism~\(\lambda\colon S^{1}\to K\).
Because all isotropy groups occurring in~\(X\) are connected,
the element~\(\lambda(e^{\pi i/2})\in K\) defines
a complex structure on the normal bundle~\(N_{Y}X\) of~\(Y\) in~\(X\)
such that scalar multiplication by elements of~\(S^{1}\subset\C\) coincides via~\(\lambda\) with the \(K\)-action.

This data is enough to define the
(complex projective) \emph{blow-up}~\(\tX\) of~\(X\) along~\(Y\) as a \(T\)-manifold,
\cf~\cite[pp.~602--605]{GriffithsHarris:1978}.\footnote{%
  In~\cite{GriffithsHarris:1978}, \(X\) and~\(Y\) are assumed to be complex analytic, so that
  the blow-up is again a complex manifold. This is not essential for the construction,
  nor for the validity of the short exact sequence~\eqref{eq:H-tX}.
  The \(T\)-action lifts by the naturality of the construction.}
The canonical \(T\)-equivariant projection
\begin{equation}
  \pi\colon \tX \to X
\end{equation}
restricts to a diffeomorphism~\(\tX\setminus \tY\to X\setminus Y\);
the preimage~\(\tY=\pi^{-1}(Y)\) is the projective bundle associated to~\(N_{Y}X\)
and of codimension~\(2\) in~\(\tX\).

\begin{lemma}
  \label{proj-blow}
  $ $
  \begin{enumroman}
  \item \label{proj-blow-1}
    \(\tX^{K}=\pi^{-1}(X^{K})\).
  \item \label{proj-blow-2}
    The characteristic circles of~\(\tX\) and~\(X\) are the same.
  \item \label{proj-blow-3}
    All isotropy groups in~\(\tX\) are connected.
  \item \label{proj-blow-4}
    \(H^{*}(\tX)\) is finite-dimensional.
  \end{enumroman}
\end{lemma}

As \(\tX\) is connected and \(T\) acts effectively on it,
this shows that \(\tX\) again satisfies the assumptions
on \(T\)-manifolds stated at the beginning of this section.

\begin{proof}
  For the first claim we observe
  that \(K\) acts trivially on~\(\tY\) and that \(\pi\colon\tX\setminus\tY\to X\setminus Y\)
  is an equivariant bijection.

  By the same token,
  new characteristic circles can only appear inside~\(\tY\). But we have just seen that all isotropy
  groups there contain~\(K\). Thus, no new characteristic circles occur. Now
  choose an~\(x\in Y\) such that \(T_{x}=K\), and let \(\tilde x\in\tY\) be a preimage of~\(x\).
  Then \(K\subset T_{\tilde x}\subset T_{x}=K\). Hence \(K\) occurs in~\(\tX\).
  This completes the proof of~\ref{proj-blow-2}.

  For~\ref{proj-blow-3} it is enough to look at some~\(\tilde x\in\tY\). Let \(x=\pi(\tilde x)\).
  Since \(Y\subset X^{K}\) is \(T_{x}\)-stable, so is its normal bundle in~\(X\) and in particular
  the fibre~\(V\) at~\(x\). By the discussion above, \(V\) is a complex \(T_{x}\)-module, and
  \(\pi^{-1}(x)\) can be identified with~\(\PP V\), so that \(\tilde x\) is the complex line
  through some~\(0\ne v\in V\). As \(v\) corresponds to some point in~\(U\), its isotropy group
  is connected.

  Consider the weight space decomposition~\(V=\bigoplus_{\gamma}V_{\gamma}\).
  An element of~\(T_{x}\) lies in~\(T_{\tilde x}\) if and only if it acts by the same scalar
  on all weight spaces for which the component of~\(v\) is non-zero, and it lies in~\(T_{v}\)
  if and only if this scalar equals \(1\).
  Since both~\(T_{v}\) and \(T_{\tilde x}/T_{v}\cong K\) are
  connected, so is \(T_{\tilde x}\).

  Because \(\pi\colon\tY\to Y\) is a projectivized complex vector bundle,
  \(H^{*}(\tY)\) is a finitely generated free module over~\(H^{*}(Y)\) via~\(\pi^{*}\),
  and the sequence
  \begin{equation}
    \label{eq:H-tX}
    0 \longrightarrow
    H^{*}(X) \longrightarrow
    H^{*}(\tX) \longrightarrow
    \frac{H^{*}(\tY)}{H^{*}(Y)} \longrightarrow
    0
  \end{equation}
  is exact, \cf~\cite[p.~605]{GriffithsHarris:1978}.
  Note that \(1\in H^{*}(\tY)\) can always be taken as part of a basis,
  so that the quotient~\(H^{*}(\tY)/H^{*}(Y)\) is again
  finitely generated free over~\(H^{*}(Y)\).
  It therefore follows from~\eqref{eq:H-tX} and Lemma\nobreakspace \ref {thm:isotropy}\,\ref{thm:isotropy-3}
  that \(H^{*}(\tX)\) is again finite-dimensional.
\end{proof}

If we blow up all components of all characteristic submanifolds of~\(X\),
we arrive at a connected \(T\)-manifold~\(\BlT X\), on which \(T\) acts regularly.
Moreover, if \(X^{T}\ne\emptyset\), then \(\BlT X\) is locally standard
by Lemma\nobreakspace \ref {thm:regular-loc-std}.
Note that \(\BlT X\) depends on the order of the various blow-ups
although we do not indicate that in our notation.

\smallskip

We now generalize the sequence~\eqref{eq:H-tX}
to equivariant cohomology and derive some consequences.

\begin{lemma}
  \label{thm:cohomology-blowup}
  Let \(X\) be a \(T\)-manifold, and let \(K\subset T\) be a characteristic circle.
  Let \(\pi\colon\tilde X\to X\) be the blow-up of~\(X\) along a component~\(Y\) of~\(X^{K}\),
  and set~\(\tY=\pi^{-1}(Y)\).
  Then \(\HT^{*}(\tY)\) is a free~\(\HT^{*}(Y)\)-module, and
  \begin{equation*}
    0 \longrightarrow
    \HT^{*}(X) \longrightarrow
    \HT^{*}(\tilde X) \longrightarrow
    \frac{\HT^{*}(\tY)}{\HT^{*}(Y)} \longrightarrow
    0
  \end{equation*}
  is exact.
\end{lemma}

\begin{proof}
  The map~\(ET\times_{T}\tY\to ET\times_{T}Y\) between the Borel constructions
  is a projectivized vector bundle, hence in cohomology
  the restriction map to the fibre is surjective.
  By the Leray--Hirsch theorem, \(\HT^{*}(\tY)\) is a free module over~\(\HT^{*}(Y)\).

  Let \(E\to B\) be a finite-dimensional approximation of~\(ET\to BT\). 
  Then the normal bundle of~\(E\times_{T}Y\) in~\(E\times_{T}X\) is~\(E\times_{T}N_{Y}X\),
  which implies that \(E\times_{T}\tX\) is the blow-up of~\(E\times_{T}X\) along~\(E\times_{T}Y\).
  Thus, the exactness of~\eqref{eq:H-tX} carries over.
\end{proof}

\begin{proposition}
  \label{thm:depth-syzygy-blowup}
  $ $
  \begin{enumroman}
  \item \label{thm:depth-blowup}
    \(\depth \HT^{*}(\BlT X)=\depth\HT^{*}(X)\).
  \item \label{thm:syzygy-blowup}
    For any~\(j\ge0\),
    \(\HT^{*}(\BlT X)\) is a \(j\)-th syzygy
    \(\:\Longleftrightarrow\:\)
    \(\HT^{*}(X)\) is a \(j\)-th syzygy.
  \end{enumroman}
\end{proposition}

Note that the second part is Proposition\nobreakspace \ref {thm:syzygy-blowup:intro}.

\begin{proof}
  It suffices to consider a single blow-up~\(\tilde X\to X\)
  along a component~\(Y\) of some~\(X^{K}\).
  If \(Y\) has codimension~\(2\) in~\(X\), then \(\tX=X\), and there is nothing to show.
  So assume that \(Y\) has a larger codimension.

  \ref{thm:depth-blowup}:
  Because \(\HT^{*}(\tY)\) is a free module over~\(\HT^{*}(Y)\),
  these two modules have the same depth over~\(R\), which
  is also the depth of the quotient~\(M=\HT^{*}(\tY)/\HT^{*}(Y)\).
  Using that \(\tY\) is a component of~\(\tX^{K}\) and \(\HT^{*}(\tY)\) therefore a direct summand of~\(\HT^{*}(\tX^{K})\),
  we get from
  Proposition\nobreakspace \ref {thm:syzygy-depth-fixedpoints}\,\ref{thm:syzygy-depth-1}
  that
  \begin{equation}
    \depth M = \depth \HT^{*}(\tY) \ge \depth \HT^{*}(\tX^{K}) \ge \depth \HT^{*}(\tX).
  \end{equation}
  From Lemma\nobreakspace \ref {thm:cohomology-blowup} and
  the behaviour of depth in short exact sequences,
  \cf~\cite[Prop.~1.2.9]{BrunsHerzog:1998},
  we conclude
  \begin{align}
    \depth \HT^{*}(\tX) &\ge \depth\HT^{*}(X)
    \shortintertext{and}
    \depth\HT^{*}(X) &\ge \min\bigr(\depth \HT^{*}(\tX),\depth M+1\bigl) \ge \depth \HT^{*}(\tX),
  \end{align}
  as desired.

  \ref{thm:syzygy-blowup}:
  This part is similar.
  If \(\HT^{*}(X)\) is a \(j\)-th syzygy over~\(R\), then so is \(\HT^{*}(Y)\)
  by Proposition\nobreakspace \ref {thm:syzygy-depth-fixedpoints}\,\ref{thm:syzygy-depth-2}
  (or part~\ref{thm:syzygy-XK} together with Lemma\nobreakspace \ref {thm:reduction-effective}\,\ref{thm:reduction-effective-HT}).
  In the same vein, if \(\HT^{*}(\tX)\) is a \(j\)-th syzygy, then so is \(\HT^{*}(\tY)\),
  hence also \(\HT^{*}(Y)\). So in any case we can assume that \(M\) is a \(j\)-th syzygy.
  Now we localize the short exact sequence from Lemma\nobreakspace \ref {thm:cohomology-blowup} at~\(\pp\lhd R\),
  \begin{equation}
    0 \longrightarrow
    \HT^{*}(X)_{\pp} \longrightarrow
    \HT^{*}(\tX)_{\pp} \longrightarrow
    M_{\pp} \longrightarrow
    0.
  \end{equation}
  From the characterization~\eqref{eq:condition-syzygy} of syzygies in terms of depth
  we get that the depth of \(\HT^{*}(X)_{\pp}\) or~\(\HT^{*}(\tX)_{\pp}\)
  is bounded below by~\(\min(j,\depth R_{\pp})\), and the same applies to~\(M_{\pp}\).
  Reasoning in the same way as above, we see that this bound holds
  for the remaining term as well.
  We conclude by applying \eqref{eq:condition-syzygy} a second time.
\end{proof}

\section{A decomposition of the Atiyah--Bredon sequence}
\label{sec:decomposition}

Let \(X\) be a \(T\)-manifold of dimension~\(n\).
In addition to the blanket assumptions stated in Section\nobreakspace \ref {sec:blowup},
we require in this section that the \(T\)-action is \emph{locally standard},
that is, locally weakly equivariantly diffeomorphic to
\( 
  \C^{r}\times\R^{n-2r}
\). 
It then follows for example from a result of Schwarz~\cite[Thm.~1]{Schwarz:1975}
that the quotient is locally diffeomorphic to
\( 
  [0,\infty)^{r}\times\R^{n-2r}
\). 
This shows that \(X/T\) is a manifold with corners
and in fact a nice manifold with corners, \cf~\cite[Sec.~5.1]{MasudaPanov:2006}.
Here ``nice'' means that each codimension-\(k\) face is contained in
exactly \(k\) facets (codimension-\(1\) faces), which is a consequence
of the orbit structure.
(Note that we assume all faces to be closed and connected.)

The main result of this section (Theorem\nobreakspace \ref {thm:iso-HABc-HBc})
relates the cohomology of the non-augmented Atiyah--Bredon complex~\(\ABc^{*}(X\AAopt)\)
with constant (\(\AA=\kk\)) or twisted (\(\AA=\kktilde\)) coefficients
to the cohomology of the faces of~\(X/T\).
This description will be fundamental to all our applications in Section\nobreakspace \ref {sec:previous-work}.

Let \(\pi\colon X\to X/T\) be the quotient map;
we sometimes identify \(X^{T}\) with its image in~\(X/T\).
Let \(P\subset X/T\) be a face. 
We write \(\dot P\) for the relative interior of~\(P\) and
\(\partial P=P\setminus\dot P\) for its boundary.
We will often use that \(P\) is a topological manifold with boundary~\(\partial P\).
The inclusion~\(\dot P\hookrightarrow P\) induces an isomorphism
\begin{equation}
  \label{eq:P-incl-int}
  H_{*}(\dot P) = H_{*}(P) \,;
\end{equation}
\cf~\cite[Lemma~11.7]{Massey:1978}.
We define \(T_{P}\) to be the common isotropy group of the points in~\(X\) lying over~\(\dot P\)
and set \(T^{P}=T/T_{P}\).
The codimension of~\(P\) in~\(X/T\) is the rank of~\(T_{P}\).

The faces of~\(X/T\) form a poset~\(\PPP\) under inclusion.
The \emph{rank} of a face~\(P\in\PPP\) is the rank of~\(T^{P}\).
We write \(P\facet Q\) if \(P\) is a facet of~\(Q\),
that is, if \(P<Q\) and \(\rank P=\rank Q-1\).

The preimage~\(X^{P}=\pi^{-1}(P)\) of a face~\(P\in\PPP\)
is a connected component of~\(X^{T_{P}}\) and in particular
a \(T\)-stable closed submanifold of~\(X\) of codimension~\(2\codim P\).
The characteristic submanifolds of~\(X\) are exactly the~\(X^{F}\) with~\(F\facet X/T\),
and likewise the characteristic circles the corresponding~\(T_{F}\)'s.
The preimage
\begin{equation}
  \piinvdot{P}=\pi^{-1}(\dot P) 
\end{equation}
is the open subset of~\(X^{P}\) on which \(T^{P}\) acts freely.

For~\(F\facet X/T\), we write \(N^{F}\) for the normal bundle of~\(X^{F}\) in~\(X\),
and we fix an isomorphism~\(\lambda_{F}\colon S^{1}\to T_{F}\).
As in Section\nobreakspace \ref {sec:blowup}, this defines a complex structure on~\(N^{F}\)
and hence an orientation.
If \(P\le F\facet X/T\), we write
\begin{equation}
  e_{F}^{P}\in\HT^{2}(\piinvdot{P})
\end{equation}
for the restriction of the equivariant Euler class of~\(N^{F}\) to~\(\piinvdot{P}\).

Since the \(T\)-action is locally standard, the following is immediate:

\begin{lemma}
  \label{thm:intersection-XF}
  Let \(P\in\PPP\). Then \(X^{P}\) is a connected component
  of the intersection of the~\(X^{F}\) with~\(P\le F\facet X/T\).
  Moreover, this intersection is transversal, and \(T_{P}\) is the product of the
  corresponding characteristic circles~\(T_{F}\).
\end{lemma}

We also define the algebra~\(R_{P}=H^{*}(B T_{P})\)
as well as the canonical restriction maps~\(\rho_{P}\colon R\to R_{P}\)
and~\(\rho_{P Q}\colon R_{P}\to R_{Q}\) for~\(P\le Q\).
For~\(P\le F\facet X/T\)
we let \(t_{F}^{P}\in R_{P}\) be the restriction of~\(e_{F}^{P}\)
to any point in~\(X^{P}\); it is the weight of the \(T_{P}\)-action on the
restriction of~\(N^{F}\) to~\(X^{P}\). Note that for~\(P\le Q\) we have
\begin{equation}
  \label{eq:canonical-generators}
  \rho_{P Q}\bigl(t_{F}^{P}\bigr) =
  \begin{cases}
    t_{F}^{Q} & \text{if \(Q\le F\),} \\
    0 & \text{otherwise}
  \end{cases}
\end{equation}
by construction.

For any~\(P\in\PPP\) the elements~\(\tP_{F}\) with~\(P\le F\facet X/T\)
form a basis of the vector space~\(H^{2}(BT_{P})\) by Lemma\nobreakspace \ref {thm:intersection-XF},
whence a canonical isomorphism of algebras
\begin{equation}
  \label{eq:iso-RP-RtF}
  R_{P} \cong \kk\bigl[\tP_{F}:P\le F\facet X/T\bigr],
  \qquad
  t \mapsto \!\! \sum_{P\le F\facet X/T} \!\! \pair{t,\lambda_{F}}\, \tP_{F},
\end{equation}
where we consider \(\lambda_{F}\) as an element of~\(H_{1}(T_{P})\)
and write \(\pair{-,-}\) for the canonical pairing between~\(H_{1}(T_{P})\)
and \(H^{1}(T_{P})=H^{2}(BT_{P})\).

The \(T\)-action on~\(X\) lifts to the orientation cover~\(\tilde X\), \cf~\citeorbitsfour{Lemma~2.13}.
Hence \(\tilde X/T\to X/T\) is a two-fold cover.
Whenever we use twisted coefficients for~\(X/T\) or subsets thereof, we refer to this two-fold cover.
Because it restricts to the orientation cover over each open face~\(\dot P\),
we recover the usual meaning of (co)homology with twisted coefficients in this case.
In particular, the preimage~\(\tilde P\) of each face~\(P\) in~\(\tilde X/T\)
is an orientable topological manifold with boundary in the sense of~\cite[\S 11.2]{Massey:1978}.

Let \(Y\) be a closed subset of some space~\(Z\). Recall that for cohomology with compact supports,
the role of the relative cohomology of the pair~\((Z,Y)\) is played by the cohomology of the complement~\(Z\setminus Y\),
\cf~\cite[Sec.~1.4]{Massey:1978}.
In particular, the connecting homomorphism in the long exact cohomology sequence 
is a map
\begin{equation}
  \delta=\delta_{Z,Y}\colon\Hc^{*}(Y)\to\Hc^{*+1}(Z\setminus Y).
\end{equation}
The same remark applies to equivariant cohomology and to twisted coefficients.

\begin{lemma}
  \label{thm:euler-vanish}
  Let \(Y\) be a closed \(T\)-stable submanifold of some \(T\)-manifold~\(Z\), and let \(e_{Y}\)
  be the equivariant Euler class of its normal bundle. Then
  \begin{equation*}
    \delta(\alpha\,e_{Y}) = 0 \in \HTc^{*}(Z\setminus Y\AAopt)
  \end{equation*}
  for any~\(\alpha\in\HTc^{*}(Y\AAopt)\).
\end{lemma}

\begin{proof}
  Let \(E\) be a \(T\)-stable tubular neighbourhood of~\(Y\) in~\(Z\).
  The commutative diagram
  \begin{equation}
    \begin{tikzcd}
      \HTc^{*}(E\AAopt) \arrow{r} \arrow{d} & \HTc^{*}(Y\AAopt) \arrow{r}{\delta} \arrow{d}{=} & \HTc^{*+1}(E\setminus Y\AAopt) \arrow{d} \\
      \HTc^{*}(Z\AAopt) \arrow{r} & \HTc^{*}(Y\AAopt) \arrow{r}{\delta} & \HTc^{*+1}(Z\setminus Y\AAopt)
    \end{tikzcd}
  \end{equation}
  shows that \(\delta\) vanishes on any~\(\beta\in\HTc^{*}(Y\AAopt)\) in the image of the restriction map
  from~\(E\) to~\(Y\). By the Thom isomorphism, these are exactly the multiples of~\(e_{Y}\).
\end{proof}

For \(P\in\PPP\) we consider the composition
\begin{equation}
  \label{eq:def-psi-P}
  \phi_{P}\colon \Hc^{*}(\dot P\AAopt) 
  \stackrel{\cong}{\longrightarrow}
  H_{\TPc}^{*}(\piinvdot{P}\AAopt) \longrightarrow \HTc^{*}(\piinvdot{P}\AAopt) \,;
\end{equation}
the first map is an isomorphism because \(T^{P}\) acts freely on~\(\piinvdot{P}\)
with quotient~\(\dot P\). Taking \eqref{eq:iso-RP-RtF} into account, we define
\begin{align}
  \psi_{P}\colon \Hc^{*}(\dot P\AAopt)\otimes R_{P} &\to \HTc^{*}(\piinvdot{P}\AAopt), \\
  \alpha\, \tP_{F_{1}}\cdots \tP_{F_{k}} &\mapsto
  \phi_{P}(\alpha)\,e_{F_{1}}^{P}\cdots e_{F_{k}}^{P}.
\end{align}
(The \(F_{1}\),~\dots,~\(F_{k}\) above need not be distinct, and the ``\(\otimes\)'' on the left is suppressed.)

\begin{lemma}
  \label{thm:psi-P-iso}
  Let \(P\in\PPP\).
  \begin{enumroman}
  \item The map~\(\psi_{P}\) is an isomorphism of graded vector spaces.
  \item For \(P\facet Q\) the following diagram commutes:
    \begin{equation*}
      \begin{tikzcd}
	\Hc^{*}(\dot P\AAopt)\otimes R_{P} \arrow{r}{\psi_{P}} \arrow{d}[left]{\delta\otimes\rho_{PQ}} & \HTc^{*}(\piinvdot{P}\AAopt) \arrow{d}{\delta} \\
 	\Hc^{*+1}(\dot Q\AAopt)\otimes R_{Q} \arrow{r}{\psi_{Q}} & \HTc^{*+1}(\piinvdot{Q}\AAopt)
      \end{tikzcd}
    \end{equation*}
  \end{enumroman}
\end{lemma}

Note that the maps~\(\delta\) in the diagram above refer to the connecting homomorphisms
for the pair~\((\dot Q\cup\dot P,\dot P)\) and its preimage under~\(\pi\).

\begin{proof}
  \def\kkcp{\kappa_{c}}
  \def\kkcl{\kappa}
  Given the face~\(P\),
  we choose a splitting~\(T\cong T^{P}\times T_{P}\). 
  By Lemma\nobreakspace \ref {thm:reduction-effective}\,\ref{thm:reduction-effective-HT}
  this induces an isomorphism of graded vector spaces
  \begin{equation}
    \label{eq:def-kappa-P-c}
    \kkcp\colon
    \Hc^{*}(\dot P\AAopt)\otimes R_{P} \to \HTc^{*}(\piinvdot{P}\AAopt)
  \end{equation}
  defined similarly to~\(\psi_{P}\); only the multiplication by Euler classes is replaced
  by the restriction of the \(R\)-module structure to~\(R_{P}\) given by the splitting of~\(T\).
  (See~\cite[p.~1351 \& Rem.~2.15]{AlldayFranzPuppe:orbits4} or \cite[Prop.~7.1]{Franz:nonab}
  for the extension of Lemma\nobreakspace \ref {thm:reduction-effective}\,\ref{thm:reduction-effective-HT}
  to compact supports and twisted coefficients.)
  To prove our first claim,
  it suffices to show that \(\kkcp^{-1}\circ\psi_{P}\)
  is a vector space automorphism of~\(\Hc^{*}(\dot P\AAopt)\otimes R_{P}\).

  We start by looking more closely at the equivariant Euler classes.
  For cohomology with closed supports there is an isomorphism of algebras
  \begin{equation}
    \kkcl\colon H^{*}(\dot P)\otimes R_{P} \to \HT^{*}(\piinvdot{P})
  \end{equation}
  analogous to~\eqref{eq:def-kappa-P-c},
  and \(\kkcp\) is equivariant with respect to~\(\kkcl\)
  in the sense that
  \begin{equation}
     \kkcp(\alpha\,\tilde\alpha) = \kkcp(\alpha)\,\kkcl(\tilde\alpha)
  \end{equation}
  for~\(\alpha\in\Hc^{*}(\dot P\AAopt)\otimes R_{P}\) and~\(\tilde\alpha\in H^{*}(\dot P)\otimes R_{P}\).
  (Recall that \(\Hc^{*}(\dot P\AAopt)\) is a module over~\(H^{*}(\dot P)\),
  and likewise for equivariant cohomology.)
  For~\(\beta\in \HT^{*}(\piinvdot{P})\),
  the component of~\(\kkcl^{-1}(\beta)\) in~\(H^{0}(\dot P)\otimes R_{P} = R_{P}\)
  is the restriction of~\(\beta\) to any point~\(x\in\piinvdot{P}\).
  For the equivariant Euler class~\(e_{F}^{P}\)
  this is the weight~\(\tP_{F}\), as remarked above.
  Thus,
  \begin{equation}
    \kkcl^{-1}(e_{F}^{P}) = \tP_{F}+\gamma_{F}
  \end{equation}
  for some~\(\gamma_{F}\in H^{2}(\dot P)\).

  By what we have said so far, we have for~\(\alpha\in\Hc^{*}(\dot P\AAopt)\)
  \begin{align}
    (\kkcp^{-1}\circ\psi_{P})(\alpha\, \tP_{F_{1}}\cdots \tP_{F_{k}})
    &= \alpha \prod_{i=1}^{k} \kkcl^{-1}(e_{F_{i}}^{P}) =
    \alpha \prod_{i=1}^{k} \bigl(\tP_{F_{i}}+\gamma_{F_{i}}) \\
    &= \alpha\, \tP_{F_{1}}\cdots \tP_{F_{k}} + \text{terms of lower degree in~\(R_{P}\)}.
  \end{align}
  This implies that \(\kkcp^{-1}\circ\psi_{P}\) is bijective.
  
  We now turn to the second claim.
  Because the maps~\(\phi_{P}\) commute with the coboundary maps~\(\delta\)
  and \(R_{P}\) is generated by the~\(\tP_{F}\) with~\(P\le F\),
  it suffices to verify that for~\(\beta\in\HTc^{*}(\piinvdot{P}\AAopt)\) one has
  \begin{equation}
    \delta(\beta\,e_{F}^{P}) =
    \begin{cases}
      \delta(\beta)\,e_{F}^{Q} & \text{if \(Q\le F\),} \\
      0 & \text{otherwise.}
    \end{cases}
  \end{equation}
  As both \(e_{F}^{P}\) and \(e_{F}^{Q}\) are restrictions of the equivariant Euler class of~\(N^{F}\),
  the first case is a consequence of the linearity of~\(\delta\) over~\(\HT^{*}(X^{Q})\).
  The second case follows from Lemma\nobreakspace \ref {thm:euler-vanish}
  because the normal bundle of~\(X^{P}\) in~\(X^{Q}\) is the restriction
  of~\(N^{F}\). 
\end{proof}

The filtration of a face~\(P\) of~\(X/T\) by its faces
leads to a spectral sequence for cohomology with compact supports and coefficients~\(\AA\).
The \(i\)-th column of the \(E_{1}\)~page is
\begin{equation}
  \label{eq:ss-P-E1}
  \Bc^{i}(P\AAopt) \vcentcolon=
  \!\! \bigoplus_{\substack{Q\le P \\ \rank Q=i}} \!\! \Hc^{*+i}(Q,\partial Q\AAopt)
  = \!\! \bigoplus_{\substack{Q\le P \\ \rank Q=i}} \!\! \Hc^{*+i}(\dot Q\AAopt)
\end{equation}
with differential
\begin{equation}
  d\alpha = \bigoplus_{Q\facet O} \delta_{\dot O\cup\dot Q,\dot Q}\,\alpha
\end{equation}
for~\(\alpha\in\Hc^{k}(Q,\partial Q\AAopt)\).
Note that the degree shifts in~\eqref{eq:ss-P-E1} ensure that \(\alpha\) 
has total degree~\(k\) in~\(\Bc^{*}(P\AAopt)\), independently of the rank of~\(Q\).

Let us also introduce the monomials
\begin{equation}
  t_{P} = \prod_{P\le F}\tP_{F} \in R_{P}
\end{equation}
for~\(P\in\PPP\). These elements give rise to an isomorphisms of graded vector spaces
\begin{equation}
  \label{eq:iso-RP-sum-RQ}
  R_{P} = \bigoplus_{P\le Q} R_{Q}\, t_{Q}
\end{equation}
by collecting monomials in same generators.
For example, one can decompose \(R=\kk[t_{1},t_{2}]\) as
\begin{equation}
  \kk[t_{1},t_{2}] = \kk \oplus \kk[t_{1}]\,t_{1} \oplus \kk[t_{2}]\,t_{2} \oplus \kk[t_{1},t_{2}]\,t_{1}t_{2}.
\end{equation}

The following result is central to our work.

\begin{theorem}
  \label{thm:iso-HABc-HBc}
  For any~\(i\ge0\) there is an isomorphism of graded vector spaces
  \begin{equation*}
    H^{i}(\ABc^{*}(X\AAopt)) =
    \bigoplus_{P\in\PPP} H^{i}(\Bc^{*}(P\AAopt))\otimes R_{P}\, t_{P}.
  \end{equation*}
\end{theorem}

\begin{proof}
  By Lemma\nobreakspace \ref {thm:psi-P-iso},
  the isomorphisms~\(\psi_{P}\) combine for any~\(i\ge0\) to an isomorphism of graded vector spaces
  \begin{equation}
    \!\! \bigoplus_{\rank P=i} \!\! \Hc^{*+i}(\dot P\AAopt)\otimes R_{P}
    \to
    \!\! \bigoplus_{\rank P=i} \!\! \HTc^{*+i}(\piinvdot{P}\AAopt)
    = \ABc^{i}(X\AAopt) \,;
  \end{equation}
  these isomorphisms are compatible with the differentials.
  On the other hand, using \eqref{eq:iso-RP-sum-RQ} we can rewrite the left-hand side as
  \begin{align}
    \!\! \bigoplus_{\rank P=i} \!\! \Hc^{*+i}(\dot P\AAopt)\otimes R_{P}
    &= \!\! \bigoplus_{\substack{P\le Q \\ \rank P=i}} \!\! \Hc^{*+i}(\dot P\AAopt)\otimes R_{Q}\, t_{Q} \\
    &= \bigoplus_{Q\in\PPP} \Bc^{i}(Q\AAopt)\otimes R_{Q}\, t_{Q}.
  \end{align}
  The claim now follows by taking cohomology.
\end{proof}

\begin{theorem}
  \label{thm:condition-syzygy-HBc}
  Let \(0\le j\le r\). Then \(\HT^{*}(X)\) is a \(j\)-th syzygy if and only if
  \(H^{i}(\Bc^{*}(P;\kktilde))=0\) for any~\(P\in\PPP\) and any~\(i>\max(\rank P-j,0)\).
\end{theorem}

\begin{proof}
  By Corollary\nobreakspace \ref {thm:syzygy-ABc-fixedpoints}, \(\HT^{*}(X)\)
  is a \(j\)-th syzygy if and only if
  \begin{equation}
    \label{eq:AB-vanishes}
    H^{i}(\AB_{\TPc}^{*}(X^{P};\kktilde)) = 0
  \end{equation}
  for all~\(P\in\PPP\) and all~\(i>\max(\rank P-j,0)\).
  Theorem\nobreakspace \ref {thm:iso-HABc-HBc} implies that \eqref{eq:AB-vanishes}
  is equivalent to the vanishing of~\(H^{i}(\Bc^{*}(Q;\kktilde))\)
  for all~\(Q\le P\). Taking the latter condition for all~\(P\) and~\(i\) as before gives the result.
\end{proof}

Next we introduce the complexes~\(\BB^{*}(P)\) mentioned in the introduction.
While they turn out to be isomorphic to the~\(\Bc^{*}(P;\kktilde)\),
they are sometimes more convenient in applications, for example because
they do not involve twisted coefficients.

The following observation is well-known
for orientable topological manifolds with boundary,
see~\cite[p.~348]{Massey:1978}. The non-orientable case
follows as usual by considering the \((-1)\)-eigenspaces of the (co)homological maps
induced by the non-trivial deck transformation of the orientable two-fold cover.

\begin{lemma}
  \label{thm:poincare-boundary}
  Let \(P\in\PPP\) be of rank~\(p\), hence of dimension~\(m=n-2r+p\), and let \(Q\facet P\).
  The image of an orientation~\(o_{P}\) of~\(\dot P\)
  under the boundary map~\(\partial\colon\hHc_{m}(\dot P;\kktilde)\to\hHc_{m-1}(\dot Q;\kktilde)\)
  is an orientation of~\(\dot Q\).
  Moreover, the diagram
  \begin{equation*}
    \begin{tikzcd}[row sep=large]
      \Hc^{*-1}(\dot Q;\kktilde) \arrow{r}{\delta} \arrow{d}[left]{\partial o_{P}\cap{}} &
      \Hc^{*}(\dot P;\kktilde) \arrow{d}[left]{o_{P}\cap{}} \\
      H_{m-*}(Q) \arrow{r}{(\iota_{QP})_{*}} & H_{m-*}(P) 
    \end{tikzcd}
  \end{equation*}
  commutes, and the vertical arrows are isomorphisms.
\end{lemma}

Here \(\iota_{QP}\) denotes the inclusion~\(Q\hookrightarrow P\).
Also, we consider the cap product with~\(o_{P}\) as the composition
of the cap product~\(\Hc^{*}(\dot P;\kktilde)\to H_{m-*}(\dot P)\)
and the isomorphism~\eqref{eq:P-incl-int}.
The same applies to the cap product with~\(\partial o_{P}\).

Let \(P\in\PPP\) be of rank~\(p\), and let \(F_{1}\),~\dots,~\(F_{s}\) be the facets of~\(P\).
If \(Q\le P\) is a face of rank~\(q\),
then \(Q\) is a connected component of the intersection of \(p-q\) facets
\(F_{i_{1}}\),~\dots,~\(F_{i_{p-q}}\) of~\(P\) with~\(i_{1}<\dots<i_{p-q}\).
We write \(F_{i_{1}\dots i_{p-q}}\) for this latter intersection, and
\(Q_{(k)}\) for the connected component of \(F_{i_{1}\dots i_{k-1}i_{k+1}\dots i_{p-q}}\) 
containing \(Q\).
Then \(Q_{(1)}\),~\ldots,~\(Q_{(p-q)}\) are exactly the faces of~\(P\)
that have \(Q\) as a facet.

We define 
a complex~\(\BB^{*}(P)\) of graded vector spaces by
\begin{align}
  \BB^{q}(P) = \!\! \bigoplus_{\substack{Q\le P \\ \rank Q=q}} \!\! H_{*}(Q)
  \intertext{with differential}
  \label{eq:definition-d-BB}
  d c = \sum_{k=1}^{p-q} \! (-1)^{k-1}(\iota_{Q Q_{(k)}})_{*}(c)
\end{align}
for~\(c\in H_{i}(Q)\subset\BB^{q}(P)\),
where the \(Q_{(k)}\)'s are defined as above.
It will be convenient for us to grade homology negatively in the sense
that we define the total degree of~\(c\) in~\(\BB^{*}(P)\) to be~\(q-i\).
It is not difficult to check directly that \eqref{eq:definition-d-BB} indeed defines a differential;
it will also be implicit in the proof of the following observation.

\begin{lemma}
  \label{thm:Bc-BB-iso}
  Let \(P\in\PPP\).
  The complexes~\(\Bc^{*}(P;\kktilde)\) and \(\BB^{*}(P)\) are isomorphic
  up to a degree shift by~\(2r-n\).
\end{lemma}

\begin{proof}
  Set \(p=\rank P\) and \(m=n-2r\).
  We continue to use the notation introduced above and
  fix an orientation \(o_{P}\in\hHc_{m+p}(\dot P;\kktilde)\) of~\(P\).
  For each~\(Q\le P\), say of rank~\(q\) and a connected component of~\(F_{i_{1}\dots i_{p-q}}\),
  we define the orientation \(o_{Q}\in\hHc_{m+q}(\dot Q;\kktilde)\)
  to be the image of~\(o_{P}\) under the iterated boundary map
  \begin{multline}
    \label{eq:def-orientation-Q}
    \hHc_{m+p}(\dot P;\kktilde) \stackrel{\partial}\longrightarrow
    \hHc_{m+p-1}(\dot F_{i_{p-q}};\kktilde) \stackrel{\partial}\longrightarrow
    \hHc_{m+p-2}(\dot F_{i_{p-q-1}i_{p-q}};\kktilde) \stackrel{\partial}\longrightarrow \cdots \\
    \stackrel{\partial}\longrightarrow
    \hHc_{m+q+1}(\dot F_{i_{2}\dots i_{p-q}};\kktilde) \stackrel{\partial}\longrightarrow
    \hHc_{m+q}(\dot F_{i_{1}\dots i_{p-q}};\kktilde) \longrightarrow
    \hHc_{m+q}(\dot Q;\kktilde).
  \end{multline}
  Note that \(o_{Q}=\partial o_{Q_{(1)}}\) by definition.
  More generally,
  \begin{equation}
    \label{eq:oQ-oQ1}
    o_{Q}=(-1)^{k-1}\partial o_{Q_{(k)}}
  \end{equation}
  because a permutation of the facets~\(F_{i_{k}}\)
  in formula~\eqref{eq:def-orientation-Q}
  changes \(o_{Q}\) by the sign of the permutation.
  (Since this is true for~\([0,\infty)^{p-q}\) and
  \(P\) looks like \([0,\infty)^{p-q}\times Q\) locally around~\(Q\), it holds in general
  by 
  naturality.)

  We define a bijection~\(\Bc^{*}(P;\kktilde)\to\BB^{*}(P)\)
  by sending \(\alpha\in\Hc^{k}(\dot Q;\kktilde)\)
  to~\(\alpha\cap o_{Q}\in H_{m+q-k}(Q)\).
  This map has degree~\(-m\) since
  \(\alpha\) has total degree~\(k\) in~\(\Bc^{*}(P;\kktilde)\) and
  \(\alpha\cap o_{Q}\) total degree~\(k-m\) in~\(\BB^{*}(P)\).
  That it is compatible with the differentials
  follows from Lemma\nobreakspace \ref {thm:poincare-boundary} and identity~\eqref{eq:oQ-oQ1}.
  (The isomorphism of bigraded vector spaces
  established this way also shows \(\BB^{*}(P)\)
  is actually a complex with the given differential.)
\end{proof}

Combining Lemma\nobreakspace \ref {thm:Bc-BB-iso} with Theorem\nobreakspace \ref {thm:condition-syzygy-HBc},
we finally get Theorem\nobreakspace \ref {thm:condition-syzygy-HBc-intro}:

\begin{theorem}
  \label{thm:condition-syzygy-HBB}
  Let \(0\le j\le r\). Then
  \(\HT^{*}(X)\) is a \(j\)-th syzygy if and only if
  \(H^{i}(\BB^{*}(P))=0\)
  for all~\(P\in\PPP\) and all~\(i>\max(\rank P-j,0)\).
\end{theorem}

\section{Multiplicative structure}
\label{sec:product}

We still assume that \(X\) is a manifold with a locally standard \(T\)-action,
and for simplicity we additionally assume it to be compact,
so that the fixed point set~\(X_{0}\) is also compact.

Then \(\AB^{0}(X)=\HT^{*}(X_{0})\) has a product. Because \(H^{0}(\AB^{*}(X))\)
is contained in~\(\AB^{0}(X)\), we can consider the product of two elements
of~\(H^{0}(\AB^{*}(X))\) as an element of~\(\AB^{0}(X)\). Note that the image
of~\(\HT^{*}(X)\) in~\(H^{0}(\AB^{*}(X))\) is multiplicatively closed. In particular,
if \(\HT^{*}(X)\) is reflexive, then \(H^{0}(\AB^{*}(X))=\HT^{*}(X)\) is an algebra.

Similarly, for any face~\(P\in\PPP\)
we may consider the product of two elements of~\(H^{0}(\B^{*}(P))\)
as an element of~\(\B^{0}(P)\). Moreover, for any face~\(Q\le P\)
there is a restriction map~\(H^{0}(\B^{*}(P))\to H^{0}(\B^{*}(Q))\).
Given \(\alpha\in H^{0}(\B^{*}(P))\), we write \(\at{\alpha}_{Q}\)
for its restriction to~\(Q\).
We also write elements of~\(H^{0}(\B^{*}(P))\otimes R_{P}t_{P}\)
in the form~\(\alpha f t_{P}\) with~\(\alpha\in H^{0}(\B^{*}(P))\) and \(f\in R_{P}\).

\begin{proposition}
  \label{thm:product-AB0}
  The product of~\(\alpha f t_{P}\in H^{0}(\B^{*}(P))\otimes R_{P}t_{P}\subset H^{0}(\AB^{*}(X))\)
  and~\(\beta g t_{Q}\in H^{0}(\B^{*}(Q))\otimes R_{Q}t_{Q}\subset H^{0}(\AB^{*}(X))\)
  is given by
  \begin{equation*}
    \alpha \, f \, t_{P} \cdot \beta \, g \, t_{Q} =
    \sum_{O\subset P\cap Q}\at{\alpha}_{O}\at{\beta}_{O} (f g \, t_{P\vee Q})\,t_{O}
    \in \AB^{0}(X).
  \end{equation*}
\end{proposition}

Here \(P\vee Q\) denotes the smallest face of~\(X/T\) containing both~\(P\) and~\(Q\),
and the sum extends over all components~\(O\in\PPP\) of the intersection~\(P\cap Q\).
For each pair of faces~\(N\le O\) we think of~\(R_{O}\) as a subalgebra of~\(R_{N}\)
by identifying \(t^{O}_{F}\) with~\(t^{N}_{F}\). We can then say that \(R_{O}\subset R_{N}\) is
generated by the~\(t_{F}\in R_{N}\) with~\(O\le F\).

\begin{proof}
  We consider \(\alpha f t_{P}\), \(\beta g t_{Q}\) and their product as elements of
  \begin{equation}
    \AB^{0}(X) = \HT^{*}(X_{0})
    = \bigoplus_{\rank N = 0} \Hc^{*}(N)\otimes R
  \end{equation}
  with componentwise product. The copy of~\(R\) indexed by~\(N\)
  decomposes as in~\eqref{eq:iso-RP-sum-RQ} according to the faces containing \(N\).

  The restriction of~\(\alpha f t_{P}\) to~\(N\) is \(\at{\alpha}_{N} f t_{P}\) if \(N\le P\)
  and \(0\) otherwise; similarly for~\(\beta\) and the claimed product.
  Because multiplication is componentwise, we only have to consider 
  the rank-\(0\) faces~\(N\) contained in~\(P\cap Q\).
  For such an~\(N\), say contained in the component~\(O\) of~\(P\cap Q\), we have
  \begin{equation}
    t_{P}\,t_{Q} =  t_{P\vee Q}\,t_{O} \in R_{N} = R,
  \end{equation}
  which proves the claim.
\end{proof}

We compare \(H^{0}(\AB^{*}(X))\) to the face ring of the poset~\(\PPP\),
which is defined as
\begin{equation}
  \label{eq:face-ring-poset}
  \kk[\PPP] = \kk\bigl[t_{P}:P\in\PPP\bigr] \bigm/
  \bigl( t_{X/T}-1, t_{P}\,t_{Q}-t_{P\vee Q}\!\sum_{O\in P\cap Q}t_{O}: P,Q\in\PPP\bigr),
\end{equation}
\cf~\cite[Sec.~5]{MasudaPanov:2006}.
The face ring is an \(R\)-algebra via the morphism of algebras
\begin{equation}
  \label{eq:face-ring-R-module}
  R\mapsto \kk[\PPP],
  \quad
  t \mapsto \!\! \sum_{\text{\(F\)~facet}} \!\! \pair{t,\lambda_{F}}\,t_{F},
\end{equation}
where we have used the same notation as in~\eqref{eq:iso-RP-RtF}.

\begin{proposition}
  \label{thm:H0AB-face-ring}
  $ $
  \begin{enumroman}
  \item \label{pr1}
    If \(H^{0}(\B^{*}(P))=\kk\) for every~\(P\in\PPP\),
    then there is an isomorphism of \(R\)-algebras
    \begin{equation*}
      H^{0}(\AB^{*}(X)) \cong \kk[\PPP].
    \end{equation*}
  \item \label{pr2}
    If the condition in~\ref{pr1} is satisfied for all proper faces,
    then this isomorphism holds in positive degrees.
  \end{enumroman}
\end{proposition}

Observe that any minimal face~\(P\in\PPP\) is a closed manifold
and moreover orientable as \(X\) and therefore also \(\piinv{P}\) are so,
\cf~\cite[Lemma~5.4]{AyzenbergEtAl:2014}.
Thus, the condition~\(H^{0}(\B^{*}(P))=\kk\) in part~\ref{pr1} above
implies that such a ~\(P\) is a single point.
Hence \(X\) is of dimension~\(2r\), any face contains a vertex
and the \(1\)-skeleton of any face is connected. These conditions are also sufficient.

\begin{proof}
  \def\AAA{H^{0}(\AB^{*}(X))}
  \ref{pr1}
  By assumption and Theorem\nobreakspace \ref {thm:iso-HABc-HBc}, we have
  \begin{equation}
    H^{0}(\AB^{*}(X)) = \bigoplus_{P\in\PPP} R_{P}\,t_{P}.
  \end{equation}
  Inspection of the formula
  given in Proposition\nobreakspace \ref {thm:product-AB0} implies that \(\AAA\) is multiplicatively closed.

  The same formula shows that
  the morphism~\(\phi\colon\kk[\,t_{P}:P\in\PPP\,]\to\AAA\)
  sending the indeterminate~\(t_{P}\) to~\(t_{P}\in R_{P}t_{P}\)
  factors through~\(\kk[\PPP]\).
  Comparing \eqref{eq:face-ring-R-module} with~\eqref{eq:iso-RP-RtF},
  we see that the resulting map~\(\bar\phi\colon\kk[\PPP]\to\AAA\)
  is a morphism of \(R\)-algebras.

  Moreover, \(\bar\phi\) is injective
  as its composition with the inclusion~\(\AAA\hookrightarrow\AB^{0}(X)=\HT^{*}(X^{T})\)
  is so by~\cite[Lemma~5.6]{MasudaPanov:2006}.
  Let \(F\facet P\) and let \(f\in R_{P}\).
  The product rule in~\(\AAA\)
  gives \((f t_{P})\cdot t_{F} = (f t_{F})\,t_{P}\). 
  This implies that \(\bar\phi\) is also surjective.

  \ref{pr2}
  This is essentially identical. The only difference is that now
  \(H^{0}(\B^{*}(Q))\) may be larger than \(\kk\) for~\(Q=X/T\). Still,
  \(H^{0}(\B^{*}(Q))\otimes R_{Q}t_{Q}\) is concentrated in degree~\(0\)
  since \(R_{Q}t_{Q}=\kk\), so that higher degrees are not affected.
\end{proof}

\section{Relation to previous work}
\label{sec:previous-work}

In this section we point out several connections
between our results and previous work
and show that many results concerning
the freeness and torsion-freeness of equivariant cohomology
have natural extensions to all syzygy orders.
The blanket assumptions on \(T\)-manifolds stated in Section\nobreakspace \ref {sec:blowup}
remain in force.

\subsection{Cohomology of orbit spaces}

Let \(X\) be an orientable rational cohomology manifold~\(X\) with a \(T\)-action.
In~\cite{Bredon:1974}, Bredon studied the cohomology
of the quotient~\(\Xf/T\) of the locally free part~\(\Xf=X_{r}\setminus X_{r-1}\)
under the assumption that \(\HTc^{*}(X)\) is free over~\(R\) (that is, of depth~\(r\)).
One of Bredon's results
is that \(H^{i}(\Xf/T)\) vanishes for~\(i>\dim X-2r\) \cite[Cor.~2]{Bredon:1974}.
(Note that \(H^{*}(\Xf/T)=H^{*}(X/T)\) if \(X\) is a regular \(T\)-manifold, see~\eqref{eq:P-incl-int}.)
Observing that compared to~\cite{Bredon:1974}
we have reversed the roles of compact and closed supports,
we can formulate a version for regular \(T\)-manifolds and other depths of~\(\HT^{*}(X)\) as follows.

\begin{corollary}
  Let \(X\) be an orientable regular \(T\)-manifold, and let \(j\ge0\).
  If \(\depth\HT^{*}(X)\ge j\), then \(\Hc^{i}(X/T)=0\) for~\(i>\dim X-r-j\).
\end{corollary}

\begin{proof}
  On the \(E_{2}\)~page
  of the spectral sequence~\eqref{eq:ss-P-E1} converging to~\(\Hc^{*}(X/T)\),
  the \(i\)-th column is \(H^{i}(\Bc^{*}(X/T))\).
  The degree shift in the definition of~\(\Bc^{i}(X/T)\) implies
  that this graded vector space is concentrated in degrees~\(\le\dim X-2r\).
  
  If \(\depth\HT^{*}(X)\ge j\), then \(H^{i}(\ABc^{*}(X))\) vanishes for~\(i>r-j\)
  by Proposition\nobreakspace \ref {thm:depth-ABc}\,\ref{thm:depth-ABc-hommf},
  hence also \(H^{i}(\Bc^{*}(X/T))\) by Theorem\nobreakspace \ref {thm:iso-HABc-HBc}.
  Thus, \(\Hc^{*}(X/T)\) vanishes in degrees greater than~\(\dim X-r-j\).
\end{proof}

\subsection{Torus manifolds}

If \(X\) satisfies Poincaré duality,
then this imposes strong restrictions
on the syzygies that can appear in equivariant cohomology,
see~\citeorbitsone{Prop.~5.12}:

\begin{theorem}[Allday--Franz--Puppe]
  Let \(X\) be a compact orientable \(T\)-manifold.
  If \(\HT^{*}(X)\) is a syzygy of order~\(\ge r/2\),
  then it is free over~\(R\).
\end{theorem}

In this section we prove a stronger result
for compact orientable \(T\)-manifolds of dimension~\(2r\),
that is, twice the dimension of~\(T\).
We start by observing that dimensions below~\(2r\)
are not interesting from the point of view of syzygies.

\begin{lemma}
  \label{thm:torsion-free-regular}
  Let \(X\) be a \(T\)-manifold of dimension~\(\le 2r\).
  If \(\HT^{*}(X)\) is torsion-free,
  then \(\dim X=2r\), \(X^{T}\ne\emptyset\), and the action is locally standard.
\end{lemma}

\begin{proof}
  If \(\HT^{*}(X)\) is torsion-free, then any component~\(Y\) of the fixed-point set~\(X^{K}\)
  of any subtorus~\(K\subset T\) contains a \(T\)-fixed point,
  \cf~\cite[Prop.~4.13]{AlldayFranzPuppe:orbits4}.
  With our tools this can be seen as follows: Write \(L=T/K\).
  By Proposition\nobreakspace \ref {thm:syzygy-depth-fixedpoints}\,\ref{thm:syzygy-XK},
  \(H_{L}^{*}(Y)\subset H_{L}^{*}(X^{K})\) is torsion-free over~\(H^{*}(BL)\),
  hence injects into~\(H_{L}^{*}(Y^{T})\) by Theorem\nobreakspace \ref {thm:3:intro-partial-exactness}.

  Because the codimension of~\(X^{T}\) in~\(X\) is at least~\(2r\),
  there cannot be any fixed points if \(\dim X<2r\).
  The same conclusion holds if \(\dim X^{K}<2r-2\) for some characteristic circle~\(K\).
  So the \(T\)-action is regular with fixed points and therefore locally standard
  by Lemma\nobreakspace \ref {thm:regular-loc-std}.
\end{proof}

\begin{remark}
  If \(X\) is a \(T\)-manifold of dimension~\(2r\),
  then \(X^{T}\) 
  is discrete and corresponds to the vertices (\(=\)~rank~\(0\) faces) of~\(X/T\).  
  Hence \(\HT^{*}(X)\) is free over~\(R\) if and only if \(H^{*}(X)\)
  vanishes in odd degrees:
  If \(\Hodd(X)=0\), then the Serre spectral sequence converging to~\(\HT^{*}(X)\)
  collapses at the \(E_{2}\)~page,
  hence \(\HT^{*}(X)\cong H^{*}(X)\otimes R\) is free over~\(R\)
  by the Leray--Hirsch theorem.
  Conversely, if \(\HT^{*}(X)\) is free over~\(R\),
  then \(\dim\Hodd(X)=\dim\Hodd(X^{T})=0\),
  see~
  \cite[Sec.~3.1]{AlldayPuppe:1993}.
\end{remark}

A \(2r\)-dimensional compact orientable \(T\)-manifold~\(X\)
is called a \emph{torus manifold} if the action (is effective and) has fixed points,
\cf~\cite[Sec.~3]{MasudaPanov:2006}.
Smooth complete toric varieties are examples of locally standard torus manifolds,
\cf~Section\nobreakspace \ref {sec:3:toric}.
For a locally standard torus manifold~\(X\), Masuda--Panov proved that
\(\HT^{*}(X;\Z)\) is free over~\(H^{*}(BT;\Z)\) if and only if every face of~\(X/T\) is acyclic over~\(\Z\)
\cite[Thm.~9.3]{MasudaPanov:2006}.
This generalized the description of~\(\HT^{*}(X;\Z)\)
in terms of Stanley--Reisner rings given
by Davis--Januszkiewicz~\cite[Thm.~4.8]{DavisJanuszkiewicz:1991}
for quasi-toric manifolds.
With our methods,
we can recover this result (for cohomology with real coefficients)
and also relate it to torsion-freeness.
We will consider non-compact and non-orientable \(T\)-manifolds
as well as other syzygies in the next section.

For the rest of this section, \(X\) denotes a torus manifold (of dimension~\(2r\)).

\begin{lemma}
  \label{thm:HQ-HTX-inj}
  Assume the \(T\)-action on~\(X\) to be locally standard with quotient \(Q=X/T\).
  Then the canonical map~\(\rho\colon H^{*}(Q)\to\HT^{*}(X)\) is injective,
  and \(\rho(\tilde H^{*}(Q))\) is contained in the \(R\)-torsion submodule of~\(\HT^{*}(X)\).  
\end{lemma}

\begin{proof}
  See the discussion preceding eq.~(4.2) and the proof of Prop.~5.3 in~\cite{AyzenbergEtAl:2014}.
  (That \(X\) has fixed points, as required by~\cite{AyzenbergEtAl:2014},
  is not needed for these arguments.)
  It also follows from the case~\(m=2\) of Lemma\nobreakspace \ref {thm:inj-barE-E} below
  because \(\tilde H^{*}(Q)\) is contained in~\(H^{-1}(\barB^{*}(Q))\)
  as \(X^{T}\) is discrete,
  and the torsion submodule of~\(\HT^{*}(X)\) equals \(H^{-1}(\barAB^{*}(X))\)
  by the localization theorem in equivariant cohomology.
\end{proof}

\begin{lemma}
  \label{thm:HB-face-acyclic}
  If \(X\) is locally standard and \(X/T\) face-acyclic,
  then for any~\(P\in\PPP\),
  \begin{equation*}
    H^{i}(\B^{*}(P)) = H^{i}(P) =
    \begin{cases}
      \kk & \text{if \(i=0\),} \\
      0 & \text{if \(i>0\).}
    \end{cases}
  \end{equation*}
\end{lemma}

Here we say that a manifold with corners~\(Q\) is \emph{face-acyclic}
if all of its faces (including \(Q\) itself) are acyclic.
The complex~\(\B^{*}(P)\) was defined in~\eqref{eq:ss-P-E1}.

\begin{proof}
  We can look at \(\B^{*}(P)\)
  as the \(E_{1}\)~page of the spectral sequence induced by the filtration of~\(P\) by its faces
  and converging to~\(H^{*}(P)\).
  Since \(X\) is orientable, so is \(\piinv{Q}\) and therefore also \(\dot Q\) for any~\(Q\in\PPP\),
  \cf~\cite[Lemma~5.4]{AyzenbergEtAl:2014}.
  If \(Q\) is of rank~\(i\), then it is also of dimension~\(i\),
  hence by Poincaré duality \(\Hc^{*}(\dot Q)\cong H_{i-*}(\dot Q)=H_{i-*}(Q)\)
  is concentrated in degree~\(i\).
  Thus, each~\(\B^{i}(P)\) is concentrated in degree~\(0\), and
  the spectral sequence degenerates at the \(E_{2}\)~level.
  This proves the claim.
\end{proof}

\begin{proposition}
  \label{thm:torus-manifold-free}
  The following conditions are equivalent for a torus manifold~\(X\):
  \begin{enumarabic}
  \item \label{ft1}
    \(\HT^{*}(X)\) is free over~\(R\).
  \item \label{ft2}
    \(\HT^{*}(X)\) is torsion-free over~\(R\).
  \item \label{ft3}
    The \(T\)-action on~\(X\) is locally standard, and
    \(Q=X/T\) is face-acyclic.
  \end{enumarabic}
\end{proposition}

The equivalence of~\ref{ft1} and~\ref{ft2} is Corollary\nobreakspace \ref {thm:torus-manifold-free-intro}.

\begin{proof}
  \(\ref{ft1}\Rightarrow\ref{ft2}\) is trivial.

  \(\ref{ft2}\Rightarrow\ref{ft3}\):
  The action is locally standard by Lemma\nobreakspace \ref {thm:torsion-free-regular}.
  Let \(P\) be a face of~\(X/T\). 
  By Lemma\nobreakspace \ref {thm:HQ-HTX-inj}, 
  \(\tilde H^{*}(P)\) injects into the torsion submodule of~\(H_{T^{P}}^{*}(X^{P})\).
  Since this submodule is trivial by Proposition\nobreakspace \ref {thm:syzygy-depth-fixedpoints}\,\ref{thm:syzygy-XK}, \(P\) must be acyclic.
  
  \(\ref{ft3}\Rightarrow\ref{ft1}\):
  Because \(X\) is compact orientable and all faces acyclic,
  \(H^{i}(\B^{*}(P))\) vanishes for any~\(P\in\PPP\) and any~\(i>0\) by Lemma\nobreakspace \ref {thm:HB-face-acyclic}.
  Hence the cohomology of the Atiyah--Bredon sequence~\(\AB^{*}(X)\)
  is concentrated in the column~\(i=0\) by Theorem\nobreakspace \ref {thm:iso-HABc-HBc},
  and the augmented Atiyah--Bredon sequence~\(\barAB^{*}(X)\)
  is exact except possibly at the positions~\(i=-1\) and \(i=0\).
  But this implies that it is exact everywhere \citeorbitsone{Lemma~5.6},
  which by Theorem\nobreakspace \ref {thm:3:intro-partial-exactness} shows that \(\HT^{*}(X)\) is free.
\end{proof}

In dimension greater than~\(2r\),
torsion-freeness of~\(\HT^{*}(X)\) does not imply freeness,
see Section\nobreakspace \ref {sec:3:mutants} for an example and further references.

\begin{corollary}
  \label{thm:iso-HTX-face-ring}
  Assume that the \(T\)-action on~\(X\) is locally standard and \(X/T\) face-acyclic.
  Then \(\HT^{*}(X)\cong\kk[\PPP]\) as \(R\)-algebras.
\end{corollary}

\begin{proof}
  By Proposition\nobreakspace \ref {thm:torus-manifold-free}, \(\HT^{*}(X)\) is free over~\(R\), hence isomorphic
  to~\(H^{0}(\AB^{*}(X))\) by Theorem\nobreakspace \ref {thm:3:intro-partial-exactness}.
  The claim thus follows from Proposition\nobreakspace \ref {thm:H0AB-face-ring}\,\ref{pr1}.
\end{proof}

We conclude this section by a description of~\(\HT^{*}(X)\)
in the case where all proper faces of~\(X/T\) are acyclic.
For this we need some preparations.
In addition to the requirements stated so far,
we assume that the \(T\)-action on~\(X\) is locally standard,
and we write \(Q=X/T\).

As discussed in the proof of Lemma\nobreakspace \ref {thm:HB-face-acyclic},
the complex~\(\B^{*}(Q)\) 
is the \(E_{1}\)~page of a spectral sequence and converging to~\(H^{*}(Q)\).
Analogously,
the non-augmented Atiyah--Bredon complex~\eqref{eq:3:atiyah-bredon} 
is the \(E_{1}\)~page of the spectral sequence arising from the orbit filtration
and converging to~\(\HT^{*}(X)\). Moreover, the augmented complex~\(\barAB^{*}(X)\)
is the \(E_{1}\)~page of the spectral sequence arising from the induced filtration
\begin{equation}
  \label{eq:filtration-CX}
  X \cup \{*\} \subset X \cup C X_{0} \subset \dots \subset X \cup C X_{r} = CX
\end{equation}
of the cone~\(CX\) of~\(X\) with apex~\(*\)
and converging to~\(\HT^{*}(CX,*)=0\), see \citeorbitsone{Lemma~5.6}
and its proof. Let us write~\(E_{m}\) for the \(m\)-th page of this spectral sequence.

In the same fashion,
we augment \(\B^{*}(Q)\) to a complex~\(\barB^{*}(Q)\)
by setting~\(\barB^{-1}(Q)=H^{*}(Q)\), the differential~\(\barB^{-1}(Q)\to\barB^{0}(Q)\)
being the restriction to the rank-\(0\) faces. Again, this is the page~\(\bar E_{1}\)
of a spectral sequence~\((\bar E_{m})\) converging to~\(0\).
Because the map~\((CX,*)\to(CQ,*)\) is compatible with the filtrations,
we get an induced map of spectral sequences~\(\kappa_{m}\colon\bar E_{m}\to E_{m}\).

\begin{lemma} 
  \label{thm:inj-barE-E}
  The map \(\kappa_{m}\colon\bar E_{m}\to E_{m}\) is injective for any~\(m\ge1\).
\end{lemma}

\begin{proof}
  Because the boundary~\(\partial Q\) is collared in~\(Q\) \cite[Thm.~2]{Brown:1962},
  the inclusion~\(\dot Q\hookrightarrow Q\) is a homotopy equivalence.
  Hence the principal \(T\)-bundle~\(\Xf\to\dot Q\) is the restriction of some topological
  principal \(T\)-bundle~\(\tX\to Q\). Moreover, the inclusion~\(\Xf\to X\) extends to a continuous
  \(T\)-equivariant surjection~\(\psi\colon\tX\to X\).\footnote{%
    I have learnt this construction from Anton Ayzenberg.}
  Note that the projection~\(\tX\to Q\) induces an isomorphism \(H^{*}(Q)=\HT^{*}(\tX)\).

  Lift the orbit filtration of~\(X\) to a filtration of~\(\tX\) via~\(\psi\) and extend it to the cone~\(C\tX\)
  as in~\eqref{eq:filtration-CX}.
  Let \((\tilde E_{m})\) be the induced spectral sequence converging to \(\HT^{*}(C\tX,*)=0\),
  so that we get a map of spectral sequences
  \begin{equation}
    \bar E_{m} \stackrel{\kappa_{m}}\longrightarrow E_{m} \longrightarrow \tilde E_{m}.
  \end{equation}
  Because \(T\) acts freely on~\(C\tX\setminus\{*\}\), the composed map is bijective for \(m=1\),
  \cf~\cite[Prop.~2.7]{AlldayFranzPuppe:orbits4},
  hence for all~\(m\ge1\). This implies that \(\kappa_{m}\) is injective.
\end{proof}

\begin{proposition}
  \label{thm:product-HTX}
  Let \(X\) be a locally standard torus manifold.
  If all proper faces of~\(Q=X/T\) are acyclic, then there is an isomorphism of \(R\)-algebras
  \begin{equation*}
    \HT^{*}(X)\cong\kk[\PPP]\oplus\tilde H^{*}(Q).
  \end{equation*}
\end{proposition}

Here both \(\kk[\PPP]\) and~\(\tilde H^{*}(Q)\) are equipped with a product, and any mixed product
vanishes unless one factor lies in~\(\kk\subset\kk[\PPP]\).
The \(R\)-module structure on~\(\kk[\PPP]\) has been defined in Section\nobreakspace \ref {sec:product}.
The one on~\(\tilde H^{*}(Q)\) is induced by~\(\HT^{*}(X)\) via Lemma\nobreakspace \ref {thm:HQ-HTX-inj};
a different description will be given in the proof below.

Ayzenberg--Masuda--Park--Zeng~\cite[Prop.~5.2 \& 5.3]{AyzenbergEtAl:2014}
proved an integral version of Proposition\nobreakspace \ref {thm:product-HTX}
for locally standard torus manifolds
under the additional assumption that the principal \(T\)-bundle
over the interior of~\(Q\) is trivial.

\begin{proof}
  We consider the sequence
  \begin{equation}
    \HT^{*}(X,\Xf) \longrightarrow \HT^{*}(X) \longrightarrow \HT^{*}(\Xf)
  \end{equation}
  where \(\Xf=X\setminus X_{r}\) is the free part of the action.
  We have \(\HT^{*}(\Xf)=H^{*}(\dot Q)=H^{*}(Q)\).
  As in~\cite[eq.~(5.2)]{AyzenbergEtAl:2014}, this implies
  that the above sequence splits via the map~\(\rho\colon H^{*}(Q)\to\HT^{*}(X)\), so that
  \begin{equation}
    \HT^{*}(X) = \HT^{*}(X,\Xf) \oplus \rho(H^{*}(Q))
    = \kk \oplus \HT^{*}(X,\Xf) \oplus \rho(\tilde H^{*}(Q))
  \end{equation}
  as graded vector spaces.
  By Lemma\nobreakspace \ref {thm:HQ-HTX-inj},
  \(\rho(\tilde H^{*}(Q))\) is contained in the \(R\)-torsion submodule~\(M\) of~\(\HT^{*}(X)\).

  It is enough to show that \(\rho(\tilde H^{*}(Q))\) equals \(M\)
  and that the map~\(\HT^{*}(X)/M\to H^{0}(\AB^{*}(X))\) is bijective in positive degrees:
  Then \(\HT^{*}(X,\Xf)\) has to be isomorphic to the positive degree part of~\(H^{0}(\AB^{*}(X))\),
  which is the positive degree part of~\(\kk[\PPP]\) by Proposition\nobreakspace \ref {thm:H0AB-face-ring}\,\ref{pr2}.
  Moreover, because \(\HT^{*}(X,\Xf)\) and \(M\) are both ideals of~\(\HT^{*}(X)\), all non-trivial mixed products
  have to vanish, while the products with both factors from the same summand or with one factor from~\(\kk\)
  are already known.

  Consider the cohomology~\(H^{*}(\barAB^{*}(X))=E_{2}\) of the augmented Atiyah--Bredon complex.
  Because all proper faces of~\(Q\) are acyclic, the \(i\)-th column is isomorphic to~\(H^{i}(\barB^{*}(Q))\)
  for~\(i\ge 1\) by Theorem\nobreakspace \ref {thm:iso-HABc-HBc}.
  Recall that the spectral sequence~
  \((\bar E_{m})\) converges to~\(0\).
  Since the map~\(\bar E_{m}\to E_{m}\) is injective for~\(m\ge1\) by Lemma\nobreakspace \ref {thm:inj-barE-E},
  it must be bijective for~\(m\ge2\) because there
  is no possible target in~\(E_{m}\) for a higher differential that is not the image of a differential in~\(\bar E_{m}\).
  This proves the claims made above.
\end{proof}

\subsection{\texorpdfstring{\boldmath{Locally standard \(T\)-manifolds of dimension~\(2r\)}}{Locally standard T-manifolds of dimension 2r}}

In this section, \(X\) denotes a
(not necessarily compact or orientable) locally standard \(T\)-manifold of dimension~\(2r\).
In the orientable case,
this includes smooth toric varieties and, more generally,
locally standard open torus manifolds in the sense of Masuda~\cite[Sec.~2]{Masuda:2006}.
If one imposes the additional condition
that \(X\) be compact orientable with non-empty fixed point sets,
then one gets exactly the locally standard torus manifolds
mentioned in the previous section.
Toric origami manifolds with co-orientable fold are another class of examples,
which contains also non-orientable manifolds \cite[Rem.~2.5 \& Lemma~5.1]{HolmPires:2013}.

\begin{lemma}
  \label{thm:BB-boundary}
  Let \(P\in\PPP\) be face-acyclic and of rank~\(p\).
  Then for all~\(i\),
  \begin{equation*}
    H^{i}(\Bc^{*}(P;\kktilde)) = \Hc^{i}(P;\kktilde) \cong \tilde H_{p-i-1}(\partial P).
  \end{equation*}
\end{lemma}

In particular, each \(H^{i}(\Bc^{*}(P;\kktilde))\) is concentrated in degree~\(0\).

\begin{proof}
  The isomorphism~\(H^{i}(\Bc^{*}(P;\kktilde)) = \Hc^{i}(P;\kktilde)\)
  is shown as in Lemma\nobreakspace \ref {thm:HB-face-acyclic}. Moreover,
  the long exact homology sequence for the pair~\((P,\partial P)\)
  and Poincaré--Lefschetz duality~\cite[Thm.~11.3]{Massey:1978} imply
  \begin{equation}
    \label{eq:iso-partialP-P}
    \tilde H_{p-i-1}(\partial P) = H_{p-i}(P,\partial P)
    \cong \Hc^{i}(P;\kktilde).
    \qedhere
  \end{equation}
\end{proof}

\begin{proposition}
  \label{thm:open-torus-manifold-syzygy}
  Let \(j\ge1\).
  Then \(\HT^{*}(X)\) is a \(j\)-th syzygy if and only if each face~\(P\in\PPP\) 
  is acyclic and additionally \(\tilde H_{i}(\partial P)=0\) for~\(i<\min(\rank P,j)-1\)
  in case \(P\) is not compact or \(X\) not orientable.
\end{proposition}

Unlike in the compact case, syzygies of any order~\(\le r\) can appear as
the equivariant cohomology of non-compact \(T\)-manifolds of dimension~\(2r\),
see Section\nobreakspace \ref {sec:examples-non-compact}.

\begin{proof}
  By Theorem\nobreakspace \ref {thm:condition-syzygy-HBc}, \(\HT^{*}(X)\) is a \(j\)-th syzygy if and only if
  \begin{equation}
    \label{eq:quotient-criterion-free}
    H^{i}(\Bc^{*}(P;\kktilde))=0 \;\;\;\hbox{for any face~\(P\) of rank~\(p>0\) and any~\(i>\max(p-j,0)\)}.
  \end{equation}
  
  We start with the case~\(j=1\). 
  We have to show that condition~\eqref{eq:quotient-criterion-free}
  is equivalent to all faces of positive rank being acyclic and with non-empty boundary.

  \(\Rightarrow\):
  Assume that \(P\) is a face of minimal rank~\(p>0\) such that 
  \(\partial P=\emptyset\). Then \(H^{p}(\Bc^{*}(P;\kktilde))=\Hc^{*+p}(\dot P;\kktilde)\ne0\),
  which is a contradiction.
  Now assume that \(P\) is a non-acyclic face of minimal rank~\(p>0\).
  Then \(\Hc^{*}(\dot P;\kktilde)\cong H_{p-*}(P)\) is not concentrated in degree~\(p\).
  Hence \(\Bc^{p}(P;\kktilde)\) is the only piece of~\(\Bc^{*}(P;\kktilde)\)
  that does not vanish in positive degrees. Again, \(H^{p}(\Bc^{*}(P;\kktilde))\ne0\).

  \(\Leftarrow\): 
  Let \(P\) be a face of rank~\(p>0\). Since \(\partial P\ne\emptyset\), \(P\) must
  have a facet. This implies that the differential~\(\Bc^{p-1}(P;\kktilde)\to\Bc^{p}(P;\kktilde)=\kk\)
  is surjective (see Lemma\nobreakspace \ref {thm:poincare-boundary}), and \(H^{p}(\Bc^{*}(P;\kktilde))=0\).

  We now consider the case~\(j>1\). By what we have just done,
  we can assume for either direction of the proof that all faces are acyclic.
  By Lemma\nobreakspace \ref {thm:BB-boundary}, the vanishing condition in the claim is equivalent
  to~\eqref{eq:quotient-criterion-free}.
  
  \(\Rightarrow\): is therefore done.

  \(\Leftarrow\):
  It remains to show that any compact face~\(P\) may be omitted from~\eqref{eq:quotient-criterion-free}
  if \(X\) is orientable.
  In this case \(\piinv{P}\) is compact orientable as well, and
  since \(P\) is face-acyclic, \(H_{T^{P}}^{*}(\piinv{P})\)
  is free over~\(R_{T^{P}}\) by Proposition\nobreakspace \ref {thm:torus-manifold-free}. 
  By the already established implication~``\(\Rightarrow\)'',
  this proves \eqref{eq:quotient-criterion-free} for this~\(P\) (and its faces).
\end{proof}

\begin{corollary}
  \label{thm:open-torus-manifold}
  $ $
  \begin{enumroman}
  \item \label{bb1}
    \(\HT^{*}(X)\) is torsion-free if and only if
    every face~\(P\in\PPP\) is acyclic and contains a vertex.
  \item \label{bb2} 
    Assume \(X\) to be orientable. Then
    \(\HT^{*}(X)\) is free over~\(R\)
    if and only if every face~\(P\in\PPP\) is acyclic and \(\partial P\) is acyclic
    for every non-compact face~\(P\in\PPP\).
  \end{enumroman}
\end{corollary}

For open torus manifolds, part~\ref{bb2} is due to Masuda~\cite[Thm.~4.3]{Masuda:2006};
see also Goertsches--Töben~\cite[Sec.~10]{GoertschesToeben:2010}.
In the context of orientable rational cohomology manifolds,
Bredon~\cite[Cor.~3]{Bredon:1974} established much earlier that
the freeness of~\(\HT^{*}(X)\) implies the acyclicity of~\(\Xf/T\).

\begin{proof}
  \ref{bb1} A minimal face of positive rank without vertex has empty boundary.
  That any face contains a vertex is therefore equivalent to any face of positive rank
  having non-empty boundary. Hence we recover the case~\(j=1\) of Proposition\nobreakspace \ref {thm:open-torus-manifold-syzygy}.

  \ref{bb2} This is the case~\(j=r\).
\end{proof}

The ``only if'' parts in Proposition\nobreakspace \ref {thm:open-torus-manifold-syzygy}
and Corollary\nobreakspace \ref {thm:open-torus-manifold}
do need the dimension condition on~\(X\).
(Just take the product of a toric manifold and some compact manifold with trivial \(T\)-action.)

\subsection{Toric varieties}
\label{sec:3:toric}

Let \(\Sigma\) be a regular rational fan in~\(\R^{r}\).
The associated smooth toric variety~\(X_{\Sigma}\)
is an orientable locally standard \(T\)-manifold of dimension~\(2r\),
so the results of the preceding sections apply.

There is an inclusion-reversing bijection between
the faces of~\(X_{\Sigma}/T\) and the cones~\(\sigma\in\Sigma\);
the face~\(P_{\sigma}\in\PPP\) corresponding to a cone~\(\sigma\in\Sigma\)
is of dimension \(\dim P_{\sigma}=\codim\sigma\) and acyclic as
\(\dot P_{\sigma}\) is diffeomorphic to~\(\R^{\codim\sigma}\).

In particular, if \(X_{\Sigma}\) is compact,
then by Corollary\nobreakspace \ref {thm:iso-HTX-face-ring} there is
an isomorphism of \(R\)-algebras
\begin{equation}
  \label{eq:HTXSigma-SR}
  \HT^{*}(X_{\Sigma}) \cong \kk[\Sigma],
\end{equation}
where the Stanley--Reisner ring~\(\kk[\Sigma]\) is isomorphic to the face ring of~\(\PPP\).
In fact, \eqref{eq:HTXSigma-SR} holds for any smooth toric variety,
as shown first by Bifet--De\,Concini--Procesi~\cite[Thm.~8]{BifetDeConciniProcesi:1990}.
In this section we want to reformulate the criterion given in 
Proposition\nobreakspace \ref {thm:open-torus-manifold-syzygy}
in terms of the links of the cones in~\(\Sigma\),
and we determine the depth and \(\Ext\)~modules
of Stanley--Reisner rings.

The fan~\(\Sigma\) canonically determines a simplicial complex
whose \(k\)-simplices are the cones~\(\sigma\in\Sigma\) of dimension~\(k+1\).
When we speak of the link~\(L_{\sigma}\) of~\(\sigma\), we mean
its link in this simplicial complex (which equals the whole complex for~\(\sigma=0\)).
Note that \(\dim L_{\sigma}=\codim\sigma-1\).

\begin{lemma}
  \label{thm:link-boundary}
  \( 
    H^{i}(\BB^{*}(P_{\sigma})) \cong \tilde H_{\codim\sigma-1-i}(L_{\sigma})
  \) 
  for any \(\sigma\in\Sigma\) and any~\(i\).
\end{lemma}

\begin{proof}
  Let \(d=\dim\sigma\), and
  let \(\rho_{1}\),~\dots,~\(\rho_{s}\) be the rays of~\(\Sigma\) not contained in~\(\sigma\).
  The \(m\)-simplices in~\(L_{\sigma}\) (including the empty simplex of dimension~\(-1\))
  correspond to the \((m+d+1)\)-dimensional cones in~\(\Sigma\) containing~\(\sigma\),
  as do the \((m+1)\)-codimensional faces of~\(P_{\sigma}\).
  Let us write the simplex corresponding to such a cone~\(\tau\)
  as~\((i_{1},\dots,i_{m})\) where \(\rho_{i_{1}}\),~\dots,~\(\rho_{i_{m}}\)
  are the rays of~\(\tau\) not contained in~\(\sigma\).

  In the augmented simplicial chain complex~\(\tilde C_{*}(L_{\sigma})\), we have
  \begin{equation}
    d (i_{1},\dots,i_{m}) = \sum_{k=1}^{m} (-1)^{k-1} (i_{1},\dots,i_{k-1},i_{k+1},\dots,i_{m}).
  \end{equation}
  On the other hand,
  since all faces are acyclic, there is a canonical basis~\((c_{\tau})\) of~\(\BB^{*}(P)\)
  corresponding to the~\(P_{\tau}\)'s, and the definition~\eqref{eq:definition-d-BB} of the differential gives
  \begin{equation}
    d c_{(i_{1},\dots,i_{m})} = \sum_{k=1}^{m} (-1)^{k-1} c_{(i_{1},\dots,i_{k-1},i_{k+1},\dots,i_{m})}.
  \end{equation}
  Therefore, the complexes~\(\tilde C_{*}(L_{\sigma})\) and~\(\BB^{\codim\sigma-1-*}(P_{\sigma})\) are isomorphic,
  whence the claim.
\end{proof}

Combining Theorem\nobreakspace \ref {thm:iso-HABc-HBc} with Lemmas~\ref{thm:Bc-BB-iso} and~\ref{thm:link-boundary},
we get for any~\(i\ge0\) an isomorphism of graded vector spaces 
\begin{equation}
  \label{eq:decomposition-toric}
  H^{i}(\ABc^{*}(X_{\Sigma}))
  = \bigoplus_{\sigma\in\Sigma} \tilde H_{\codim\sigma-i-1}(L_{\sigma})\otimes R_{P_{\sigma}} t_{P_{\sigma}}.
\end{equation}
A similar decomposition has been obtained by
Barthel--Brasselet--Fieseler--Kaup
for the equivariant intersection cohomology
of an arbitrary toric variety
\cite[eq.~(3.4.2) \& Rem.~3.5]{BarthelBrasseletFieselerKaup:2002}.

\begin{proposition} $ $
  \label{thm:SR}
  \begin{enumroman}
  \item \label{ext-SR}
    For any~\(i\ge0\) there is an isomorphism of graded vector spaces
    \begin{equation*}
      \Ext_{R}^{i}(\kk[\Sigma],R) \cong
      \bigoplus_{\sigma\in\Sigma} \tilde H_{\codim\sigma-i-1}(L_{\sigma})\otimes R_{P_{\sigma}} t_{P_{\sigma}}[-2r].
    \end{equation*}
  \item \label{depth-SR}
    One has \(\depth_{R} \kk[\Sigma]\ge i\) if and only if \(\tilde H_{j}(L_{\sigma}) = 0\)
    for all~\(\sigma\in\Sigma\) and all~\(j<\codim\sigma-i-1\).
  \end{enumroman}
\end{proposition}

In the case where \(X_{\Sigma}\) is a toric subvariety of~\(\C^{r}\),
so that \(\kk[\Sigma]\) is a quotient of~\(R\),
a decomposition as in part~\ref{ext-SR} has been obtained by
Yanagawa~\cite[Prop.~3.1]{Yanagawa:2000}
and in an equivalent form using local cohomology by Mustaţă~\cite[Cor.~2.2]{Mustata:2000}.
In the same setting, part~\ref{depth-SR} is due to Munkres~\cite[Thm.~3.1]{Munkres:1984};
the general case could be deduced from this.

\begin{proof}
  Recall that \(\HT^{*}(X_{\Sigma})\cong\hHTc_{*}(X_{\Sigma})[2r]\)
  by equivariant Poincaré duality~\eqref{eq:PD-c},
  where ``\([2r]\)'' denotes a degree shift by~\(2r\).
  Combining this with~\eqref{eq:HTXSigma-SR} and~\eqref{eq:AB-Ext-R-c},
  we obtain
  \begin{equation}
    \Ext_{R}^{i}(\kk[\Sigma],R) = H^{i}(\ABc^{*}(X_{\Sigma}))[-2r],
  \end{equation}
  which together with~\eqref{eq:decomposition-toric} proves the first claim.
  The second one now follows from~\eqref{eq:depth-Ext}.
\end{proof}

\begin{proposition}
  \label{thm:syzygy-SR}
  Let \(\Sigma\) be a regular rational fan in~\(\R^{r}\)
  with associated smooth toric variety~\(X_{\Sigma}\), and let \(j\ge0\).
  Then \(\HT^{*}(X_{\Sigma})\) is a \(j\)-th syzygy if and only if
  \(\tilde H_{i}(L_{\sigma})=0\)
  for all~\(\sigma\in\Sigma\) and all~\(i<\min(j,\codim\sigma)-1\).
\end{proposition}

\begin{proof}
  Use Theorem\nobreakspace \ref {thm:condition-syzygy-HBB}, Lemma\nobreakspace \ref {thm:link-boundary} and the identity~\(\codim\sigma=\rank P_{\sigma}\).
\end{proof}

For the freeness of~\(\HT^{*}(X_{\Sigma})\) both Proposition\nobreakspace \ref {thm:SR}\,\ref{depth-SR} and Proposition\nobreakspace \ref {thm:syzygy-SR} reduce
to Reisner's criterion for the Cohen--Macaulayness of Stanley--Reisner rings, \cf~\cite[Cor.~5.3.9]{BrunsHerzog:1998}
or~\cite[Cor.~3.4]{Munkres:1984}.
(\(\kk[\Sigma]\) is free over~\(R\) if and only if it is a Cohen--Macaulay \(R\)-module of dimension~\(r\).
Because \(\kk[\Sigma]\cong\HT^{*}(X_{\Sigma})\) is finitely generated over~\(R\),
it is Cohen--Macaulay of dimension~\(r\) over~\(R\) if and only if it is so
over itself \cite[Lemma~2.6]{GoertschesRollenske:2011}.)
See also Barthel--Brasselet--Fieseler--Kaup~\cite[Lemma~4.5]{BarthelBrasseletFieselerKaup:2002}.

Also note that the depth and the syzygy order of~\(\R[\Sigma]\) only depend on the combinatorics of the fan~\(\Sigma\)
and not on its geometry. In fact, they only depend on the support~\(|\Sigma|\subset\R^{r}\)
because the reduced homology of a link~\(L_{\sigma}\) is, up to degree shift, the local homology of any interior point
of~\(\sigma\), \cf~\cite[Lemma~3.3]{Munkres:1984}.

\subsection{Equivariant injectivity}
\label{sec:equiv-inj}

Goertsches--Rollenske \cite[Sec.~5]{GoertschesRollenske:2011}
introduced the notion of equivariant injectivity for a compact \(T\)-manifold~\(X\).
If the fixed point set~\(X^{K}\) of every subgroup~\(K\subset T\) meets \(X^{T}\),
then \emph{equivariant injectivity} of the \(T\)-action
means that the restriction map~\(\HT^{*}(X)\to\HT^{*}(X^{T})\)
is injective or, equivalently by Theorem\nobreakspace \ref {thm:3:intro-partial-exactness},
that \(\HT^{*}(X)\) is torsion-free.
Goertsches--Rollenske~\cite[Thm.~5.9]{GoertschesRollenske:2011}
give two characterizations of equivariant injectivity.
In order to state them, we need to introduce another notion.

Assume \(r\ge1\), and let \(U\) be a \(T\)-stable tubular neighbourhood
of~\(X_{r-1}\setminus X_{r-2}\) in~\(X\setminus X_{r-2}\) with quotient~\(\bar U=U/T\).
Write \(P=X/T\), so that \(\dot P\) is the quotient of~\(X\setminus X_{r-1}\).
Then the \(T\)-action on~\(X\) \emph{has no essential basic cohomology} if the restriction map
\begin{equation}
  H^{*}(\dot P) \to H^{*}(\bar U\cap\dot P)
\end{equation}
is injective.
(In~\cite{GoertschesRollenske:2011} basic differential forms on the manifolds
are used to describe the cohomology of the orbit spaces, whence the name.)

Following \cite{GoertschesRollenske:2011}, we can characterize equivariant injectivity as follows:

\begin{proposition}
  The following are equivalent for a \(T\)-manifold~\(X\):
  \begin{enumarabic}
  \item \label{cc1}
    \(\HT^{*}(X)\) is torsion-free.
  \item \label{cc2}
    \(\depth_{R_{L}} H_{L}^{*}(X^{K})\ge1\)
    for every isotropy subtorus~\(K\subsetneq T\) occurring in~\(X\)
    with quotient~\(L=T/K\).
  \item \label{cc4}
    For every isotropy subtorus~\(K\subsetneq T\) occurring in~\(X\)
    with quotient~\(L=T/K\),
    the \(L\)-manifold~\(X^{K}\) has no essential basic cohomology.
  \end{enumarabic}
  If the \(T\)-action is regular, then these conditions are also equivalent to:
  \begin{enumarabic}[resume]
  \item \label{cc3}
    The restriction map
    \begin{equation*}
      H^{*}(P) \to \bigoplus_{Q\facet P} H^{*}(Q).
    \end{equation*}
    is injective for every face~\(P\) of~\(X/T\) of positive rank.
  \end{enumarabic}
\end{proposition}

Note that by Lemma\nobreakspace \ref {thm:reduction-effective}\,\ref{thm:reduction-effective-depth}
the depth condition in~\ref{cc2} can be reformulated as
\begin{equation}
  \depth_{R} \HT^{*}(\piinv{K}) \ge 1 + \dim K,
\end{equation}
which is the form in which it appears in~\cite{GoertschesRollenske:2011}.

\begin{proof}
  The equivalence \(\ref{cc1}\Leftrightarrow\ref{cc2}\)
  is the case~\(j=1\) of Proposition\nobreakspace \ref {thm:syzygy-depth-fixedpoints}\,\ref{thm:syzygy-depth-2}.

  Assume that \(T\) acts regularly.
  Condition~\ref{cc3}, applied to a face of rank~\(1\), shows that there
  are fixed points, so that the action is locally standard by Lemma\nobreakspace \ref {thm:regular-loc-std}.
  Moreover, the map in condition~\ref{cc3}
  is the transpose of the differential~\(\BB^{p-1}(P)\to\BB^{p}(P)\)
  defined in~\eqref{eq:definition-d-BB}, where \(p=\rank P\).
  Hence this condition is equivalent to~\ref{cc1}
  by Theorem\nobreakspace \ref {thm:condition-syzygy-HBB}.

  Let \(Q\facet P\), and let \(V\) be a \(T\)-invariant tubular neighbourhood
  of~\(\piinvdot{Q}\) in~\(\piinvdot{P}\cup\piinvdot{Q}\) with quotient~\(\bar V=V/T\).
  Then the restriction map 
  for this pair of faces factorizes as
  \begin{equation}
    H^{*}(P)
    \stackrel{\cong}\longrightarrow
    H^{*}(\dot P) \longrightarrow
    H^{*}(\bar V\setminus\dot Q)
    \stackrel{\cong}\longleftarrow
    H^{*}(\bar V)
    \stackrel{\cong}\longrightarrow
    H^{*}(\dot Q)
    \stackrel{\cong}\longleftarrow
    H^{*}(Q).
  \end{equation}
  Since \(V\) can be taken as a component of~\(U\) in the definition of
  equivariant injectivity, this shows that condition~\ref{cc4} is
  equivalent to~\ref{cc3} for regular actions.

  We claim that if \(T\) does not act regularly on~\(X\), then
  for any fixed~\(K\), conditions~\ref{cc2} and~\ref{cc4} are still equivalent.
  To see this,
  consider
  the blow-up \(\BlL Z\) of (all characteristic submanifolds of)
  a component~\(Z\) of~\(X^{K}\). We have
  \begin{equation}
    \depth_{R_{L}} H_{L}^{*}(\BlL Z) = \depth_{R_{L}} H_{L}^{*}(Z)
  \end{equation}
  by Proposition\nobreakspace \ref {thm:depth-syzygy-blowup}\,\ref{thm:depth-blowup}.
  By what we have just proven,
  this value is positive if and only if \(Y\) has no essential basic cohomology.
  Now look at a single characteristic submanifold~\(Y\) of~\(Z\) and
  its preimage~\(\tY\) in the blow-up~\(\tilde Z\) of~\(Z\) along~\(Y\). 
  The blow-up of an \(L\)-stable tubular neighbourhood of~\(Y\) is a tubular neighbourhood of~\(\tY\),
  and the free parts of them are equivariantly diffeomorphic.
  Hence
  \(Z\) has no essential basic cohomology if and only if this is true for~\(\tilde Z\)
  and by induction also for~\(\BlL Z\).
  This completes the proof.
\end{proof}

\section{Examples}
\label{sec:3:examples}

\subsection{Non-compact examples}
\label{sec:examples-non-compact}

Let \(Y\) be a locally standard torus manifold with face-acyclic orbit space,
for example a complete smooth toric variety.
Choose two fixed points~\(y_{1}\ne y_{2}\) of~\(Y\)
and consider the open torus manifold~\(X=Y\setminus\{y_{1},y_{2}\}\).

Let \(P=y_{1}\vee y_{2}\) be the minimal face of~\(Y/T\)
containing both \(y_{1}\) and \(y_{2}\).
Then
\begin{equation}
  H^{i}(\Bc^{*}(Q)) \cong
  \begin{cases}
    \kk & \text{if \(Q\ge P\) and \(i=1\),} \\
    0 & \text{otherwise if \(i>0\).}
  \end{cases}
\end{equation}
By Theorem\nobreakspace \ref {thm:condition-syzygy-HBc},
\(\HT^{*}(X)\) is a \((p-1)\)-st, but not a \(p\)-th syzygy, where \(p=\dim P\).

\goodbreak

The ``smallest'' example~\(X\) such that \(\HT^{*}(X)\) is an \((r-1)\)-st syzygy, but not free
is obtained by taking \(Y=(\CP^{1})^{r}=(S^{2})^{r}\), so that \(Y/T\) is an \(r\)-dimensional cube,
and removing two fixed points corresponding to opposite vertices.
For this space \(\HT^{*}(X)\) has been explicitly computed in~\citeorbitsone{Sec.~6.1}:
\begin{equation}
  \HT^{*}(X) \cong \bigoplus_{i=0}^{r-2} R[2i]^{\binom{r}{i}} \oplus K_{r-1}[2(r-1)],
\end{equation}
where \(K_{r-1}\) is the \((r-1)\)-st syzygy in the Koszul resolution of~\(R\),
\cf~\citeorbitsone{Sec.~2.4}.

\subsection{A big polygon space}
\label{sec:3:mutants}

Let \(X\) be the big polygon space~\(X_{1,1}(1,1,1)\) introduced in~\cite{Franz:maximal}.
This is the compact orientable manifold defined by the equations
\begin{alignat}{2}
  |u_{k}|^{2} + |z_{k}|^{2} &= 1 &\qquad & (k=1,2,3), \\
  u_{1} + u_{2} + u_{3} &= 0,
\end{alignat}
with~\(u\),~\(z\in\C^{3}\). The torus~\(T=(S^{1})^{3}\) acts on~\(X\)
by rotating the variables~\(z_{k}\) in the usual way.
We apply Theorem\nobreakspace \ref {thm:condition-syzygy-HBc} to verify that \(\HT^{*}(X)\)
is torsion-free, but not free.

As shown in~\cite[Sec.~7.1]{Franz:maximal}, \(X\) is equivariantly homeomorphic
to the \(7\)-dimensional mutant constructed in~\cite{FranzPuppe:2008}.
In particular, \(X/T\) is topologically a \(4\)-ball.
Now consider the standard \(T\)-action
on~\(\C^{3}\) and on its compactification~\(Y\cong S^{6}\). Then \(Y/T\)
is topologically a \(3\)-ball. Removing the centres \(p\in X/T\) and \(q\in Y/T\),
we obtain the Hopf fibration
\begin{equation}
  (X/T)\setminus\{p\} \to (Y/T)\setminus\{q\}.
\end{equation}
(This approach was used in~\cite{FranzPuppe:2008} to construct \(X/T\) and \(X\).)

The isotropy groups occurring in~\(X\) and~\(Y\) are the coordinate subtori of~\(T\).
For such a subtorus~\(K\) of rank~\(0<k\le r\), one has \(Y^{K}=S^{2(r-k)}\),
the one-point compactification of~\(\C^{2(r-k)}\),
and \(X^{K}\cong S^{2(r-k)}\times S^{1}\), where \(T/K\) acts trivially on~\(S^{1}\).
Hence \(P=X^{K}/T\) is a face of~\(X/T\) for~\(k<r\), and
\begin{equation}
  \label{eq:BcP-mutant}
  H^{i}(\B^{*}(P)) \cong
  \begin{cases}
    H^{*}(S^{1}) & \text{if~\(i=0\),} \\
    0 & \text{otherwise.}
  \end{cases}
\end{equation}
For \(k=r\) we obtain the fixed point set.
It consists of two circles, and \eqref{eq:BcP-mutant} holds for each of them.
For~\(Q=X/T\) the complex~\(\B^{*}(Q)\) 
has the following form:
\begin{equation}
  \B^{*}(Q)\colon
  \begin{array}{r|llll}
    1 & \kk^{2} & \kk^{3} & \kk^{3} & \kk \\
    0 & \kk^{2} & \kk^{3} & \kk^{3} & 0 \\
    \hline
    & 0 & 1 & 2 & 3
  \end{array}
  \qquad
  H^{*}(\B^{*}(Q))\colon
  \begin{array}{r|llll}
    1 & \kk & 0 & 0 & 0 \\
    0 & \kk & 0 & \kk & 0 \\
    \hline
    & 0 & 1 & 2 & 3
  \end{array}
\end{equation}
(The zeroth row computes \(H^{*}(S^{2})\) because \(\Hc^{*}(\dot Q)\) is concentrated in top degree;
the first row then follows from the fact that the spectral sequence
converges to \(H^{*}(Q)=\kk\).)
By Theorem\nobreakspace \ref {thm:condition-syzygy-HBc}, this shows that \(\HT^{*}(X)\) is a first, but not a second syzygy.

We can also compute \(\HT^{*}(X)\) independently of~\cite[Sec.~5]{FranzPuppe:2008}
and~\cite[Sec.~5]{Franz:maximal}.
Combining our calculation of the~\(H^{0}(\B^{*}(P))\) above
with Theorem\nobreakspace \ref {thm:iso-HABc-HBc}  and keeping in mind the identity~\eqref{eq:iso-RP-sum-RQ},
we get an isomorphism of graded vector spaces
\begin{equation}
  H^{i}(\AB^{*}(X)) \cong
  \begin{cases}
    R\oplus R[1]\oplus R[6]\oplus R[7] & \text{if \(i = 0\),} \\
    \kk  & \text{if \(i = 2\),} \\
    0 & \text{otherwise.}
  \end{cases}
\end{equation}
The summands~\(R[6]\) and~\(R[7]\) arise because we have
two terms~\(H^{*}(S^{1})\otimes R[6]\) corresponding
to the two fixed circles, but only one linear combination of them combines
with the rest to give \(R\oplus R[1]\) via~\eqref{eq:iso-RP-sum-RQ}.
If we choose the linear combination which restricts to the same value
on both fixed point components, then according to~\eqref{eq:iso-RP-RtF}
the above isomorphism is even one of \(R\)-modules.
(For~\(i=2\) there is no choice for how \(R\) acts.)

The augmented Atiyah--Bredon complex~\eqref{eq:3:atiyah-bredon} 
is the \(E_{1}\)~page
of a spectral sequence converging to~\(0\) \citeorbitsone{Lemma~5.6},
and it is exact at position~\(-1\) since \(\HT^{*}(X)\) is torsion-free.
The next differential~\(d_{2}\colon H^{0}(\AB^{*}(X))\to H^{2}(\AB^{*}(X))\) must therefore be non-zero,
whence
\begin{equation}
  \HT^{*}(X) \cong R\oplus\m[1]\oplus R[6]\oplus R[7].
\end{equation}

We can even describe the multiplicative structure of~\(\HT^{*}(X)\),
which was not determined in~\cite{FranzPuppe:2008}.

As a warm-up, we look at the face-acyclic torus manifold~\(Y\cong S^{6}\) instead of~\(X\).
Since \(\HT^{*}(Y)\) is concentrated in even degrees,
we have an isomorphism of~\(R\)-modules
\begin{equation}
  \label{eq:iso-HT-Y}
  \HT^{*}(Y) \cong H^{*}(S^{6})\otimes R \cong R \oplus R[6].
\end{equation}
To describe the cup product in this representation,
we use Proposition\nobreakspace \ref {thm:H0AB-face-ring}\,\ref{pr1}. 
We write the generators corresponding to the facets of~\(Y/T\) as~\(t_{1}\),~\(t_{2}\),~\(t_{3}\),
and \(u\),~\(v\) for the ones corresponding to the two fixed points.
Note that \(u v=0\).
Then \(t_{1} t_{2} t_{3}=u+v\) by the Masuda--Panov formula.
Choosing \(a=u-v\) as a generator of~\(R[6]\) in~\eqref{eq:iso-HT-Y},
we get
\begin{equation}
  \label{eq:relation-a2}
  a^{2}=u^{2}+v^{2}=(u+v)^{2}=(t_{1}t_{2}t_{3})^{2}.
\end{equation}
This determines the product completely since it is \(R\)-bilinear.

Inspection of the formula given in Proposition\nobreakspace \ref {thm:product-AB0}
tells us that the \(R\)-algebra \(H^{0}(\AB^{*}(X))\)
is the tensor product of~\(\HT^{*}(Y)\) and~\(H^{*}(S^{1})\). In other words,
it is the graded commutative \(R\)-algebra
on two generators~\(a\) and~\(b\) of degrees~\(6\) and~\(1\), respectively,
with the single relation~\eqref{eq:relation-a2}.
(The relation~\(b^{2}=0\) follows already from graded commutativity.)

The cohomology of the non-augmented Atiyah--Bredon sequence is the \(E_{2}\)~page of a spectral sequence
converging to~\(\HT^{*}(X)\). Hence \(\HT^{*}(X)\) is the kernel of the differential~\(d_{2}\) discussed above,
which is the subalgebra of~\(H^{0}(\AB^{*}(X))\) obtained by removing the degree~\(1\) component.
This means that the generator~\(b\)
cannot appear on its own, but must be accompanied by~\(a\) or one of the~\(t_{i}\).


\begin{thebibliography}{99}

\bibitem{AlldayFranzPuppe:orbits1}
C.~Allday, M.~Franz, V.~Puppe,
\newblock Equivariant cohomology, syzygies and orbit structure,
\newblock \textit{Trans.\ Amer.\ Math.\ Soc.}~\textbf{366} (2014), 6567--6589;
\newblock \doi{10.1090/S0002-9947-2014-06165-5}

\bibitem{AlldayFranzPuppe:orbits4}
C.~Allday, M.~Franz, V.~Puppe,
\newblock Equivariant Poincaré--Alexander--Lefschetz duality and the Cohen--Macaulay property,
\newblock \textit{Alg.\ Geom.\ Top.}~\textbf{14} (2014), 1339--1375;
\newblock \doi{10.2140/agt.2014.14.1339}

\bibitem{AlldayPuppe:1993}
C.~Allday, V.~Puppe,
\newblock \textit{Cohomological methods in transformation groups},
\newblock Cambridge Univ.\ Press, Cambridge 1993;
\newblock \doi{10.1017/CBO9780511526275}

\bibitem{AyzenbergEtAl:2014}
A.~Ayzenberg, M.~Masuda, S.~Park, H.~Zeng,
\newblock Cohomology of toric origami manifolds with acyclic proper faces;
\newblock \arxiv{1407.0764v3}

\bibitem{BarthelBrasseletFieselerKaup:2002}
G.~Barthel, J.-P.~Brasselet, K.-H.~Fieseler, L.~Kaup,
\newblock Combinatorial intersection cohomology for fans,
\newblock \textit{Tohoku Math.\ J.\ (2)}~\textbf{54} (2002), 1--41;
\newblock \doi{10.2748/tmj/1113247177}

\bibitem{BifetDeConciniProcesi:1990}
E.~Bifet, C.~De\,Concini, C.~Procesi,
\newblock Cohomology of regular embeddings,
\newblock \textit{Adv.\ Math.}~\textbf{82} (1990), 1--34;
\newblock \doi{10.1016/0001-8708(90)90082-X}

\bibitem{Bredon:1974}
G.~E.~Bredon,
\newblock The free part of a torus action and related numerical equalities,
\newblock \textit{Duke Math.\ J.}~\textbf{41} (1974), 843--854;
\newblock \doi{10.1215/S0012-7094-74-04184-2}

\bibitem{Brown:1962}
M.~Brown,
\newblock Locally flat imbeddings of topological manifolds,
\newblock \textit{Ann.\ Math.\ (2)}~\textbf{75} (1962), 331--341;
\newblock \doi{10.2307/1970177}

\bibitem{BrunsHerzog:1998}
W.~Bruns, J.~Herzog,
\newblock \textit{Cohen--Macaulay rings},
\newblock 2nd~ed., 
\newblock Cambridge Univ.~Press, Cambridge 1998;
\newblock \doi{10.1017/CBO9780511608681}

\bibitem{BrunsVetter:1988}
W.~Bruns, U.~Vetter,
\newblock \textit{Determinantal rings},
\newblock LNM~\textbf{1327}, Springer, Berlin 1988;
\newblock \doi{10.1007/BFb0080378}

\bibitem{ChangSkjelbred:1974}
T.~Chang, T.~Skjelbred,
\newblock The topological Schur lemma and related results,
\newblock \textit{Ann.\ Math.~(2)}~\textbf{100} (1974), 307--321;
\newblock \doi{10.2307/1971074}
  
\bibitem{DavisJanuszkiewicz:1991}
M.~W.~Davis, T.~Januszkiewicz,
\newblock Convex polytopes, Coxeter orbifolds and torus actions,
\newblock \textit{Duke Math.\ J.}~\textbf{62} (1991), 417--451;
\newblock \doi{10.1215/S0012-7094-91-06217-4}

\bibitem{Eisenbud:2005}
D.~Eisenbud,
\newblock \textit{The geometry of syzygies},
\newblock Springer, New York 2005;
\newblock \doi{10.1007/b137572}

\bibitem{Franz:maximal}
M.~Franz,
\newblock Big polygon spaces,
\newblock \textit{Int.\ Math.\ Res.\ Not.}~\textbf{2015} (2015), 13379--13405;
\newblock \doi{10.1093/imrn/rnv090}

\bibitem{Franz:nonab}
M.~Franz,
\newblock Syzygies in equivariant cohomology for non-abelian Lie groups,
\newblock pp.~325-360 in: F. Callegaro \emph{et al.} (eds.),
\newblock Configuration spaces (Cortona, 2014),
\newblock \textit{Springer INdAM Ser.}~\textbf{14}, Springer, Cham 2016;
\newblock \doi{10.1007/978-3-319-31580-5\_14}

\bibitem{FranzPuppe:2008}
M.~Franz, V.~Puppe,
\newblock Freeness of equivariant cohomology and
    mutants of compactified representations,
\newblock pp.~87--98 in: M.~Harada \emph{et al.}~(eds.), Toric topology (Osaka, 2006),
  \textit{Contemp.\ Math.}~\textbf{460}, AMS, Providence, RI, 2008;
\newblock \doi{10.1090/conm/460/09012}

\bibitem{GoertschesRollenske:2011}
O.~Goertsches, S.~Rollenske,
\newblock Torsion in equivariant cohomology and Cohen--Macaulay $G$\nobreakdash-actions,
\newblock \textit{Transformation Groups}~\textbf{16} (2011), 1063--1080;
\newblock \doi{10.1007/s00031-011-9154-5}

\bibitem{GoertschesToeben:2010}
O.~Goertsches, D.~T\"oben,
\newblock Torus actions whose equivariant cohomology is Cohen--Macaulay,
\newblock \textit{J.\ Topology}~\textbf{3} (2010), 819--846;
\newblock \doi{10.1112/jtopol/jtq025}

\bibitem{GoreskyKottwitzMacPherson:1998}
M.~Goresky, R.~Kottwitz, R.~MacPherson,
\newblock Equivariant cohomology, Koszul duality, and the localization theorem,
\newblock \textit{Invent.\ Math.}~\textbf{131} (1998), 25--83;
\newblock \doi{10.1007/s002220050197}

\bibitem{GriffithsHarris:1978}
P.~Griffiths, J.~Harris,
\newblock \textit{Principles of algebraic geometry},
\newblock Wiley, New York 1978

\bibitem{HolmPires:2013}
T.~S.~Holm, A.~R.~Pires,
\newblock The topology of toric origami manifolds,
\newblock \textit{Math.\ Res.\ Lett.}~\textbf{20} (2013), 885--906;
\newblock \doi{10.4310/MRL.2013.v20.n5.a6}

\bibitem{Massey:1978}
W.~S.~Massey,
\newblock \textit{Homology and cohomology theory},
\newblock Dekker, New York 1978

\bibitem{Masuda:2006}
M.~Masuda,
\newblock Cohomology of open torus manifolds,
\newblock \textit{Proc.\ Steklov Inst.\ Math.}~\textbf{252} (2006), 146--154;
\newblock \doi{10.1134/S0081543806010147}

\bibitem{MasudaPanov:2006}
M.~Masuda, T.~Panov,
\newblock On the cohomology of torus manifolds,
\newblock \textit{Osaka J.\ Math.}~\textbf{43} (2006), 711--746;
\newblock \url{http://projecteuclid.org/euclid.ojm/1159190010}

\bibitem{Munkres:1984}
J.~R.~Munkres,
\newblock Topological results in combinatorics,
\newblock \textit{Michigan Math.\ J.}~\textbf{31} (1984), 113--128;
\newblock \doi{10.1307/mmj/1029002969}

\bibitem{Mustata:2000}
M.~Mustaţă,
\newblock Local cohomology at monomial ideals,
\newblock \textit{J.\ Symbolic Comput.}~\textbf{29} (2000), 709--720;
\newblock \doi{10.1006/jsco.1999.0302}

\bibitem{Schwarz:1975}
G.~W.~Schwarz,
\newblock Smooth functions invariant under the action of a compact Lie group,
\newblock \textit{Topology}~\textbf{14} (1975), 63--68;
\newblock \doi{10.1016/0040-9383(75)90036-1}

\bibitem{Yanagawa:2000}
K.~Yanagawa,
\newblock Alexander duality for Stanley--Reisner rings and squarefree $\mathbb{N}^{n}$-graded modules,
\newblock \textit{J.\ Algebra}~\textbf{225} (2000), 630--645;
\newblock \doi{10.1006/jabr.1999.8130}

\end{thebibliography}
\end{document}